\newtheorem{theorem}{Theorem}[section]
\newtheorem{lemma}[theorem]{Lemma}
\newtheorem{remark}[theorem]{Remark}
\newtheorem{proposition}[theorem]{Proposition}
\newtheorem{corollary}[theorem]{Corollary}
\newtheorem{definition}[theorem]{Definition}
\newtheorem{example}[theorem]{Example}
\newcommand\be{\begin{equation}}
\newcommand\ee{\end{equation}}
\newcommand\bn{\begin{eqnarray}}
\newcommand\en{\end{eqnarray}}
\newcommand\bns{\begin{eqnarray*}}
\newcommand\ens{\end{eqnarray*}}
\newcommand\bd{\begin{definition}}
\newcommand\ed{\end{definition}}
\newcommand\br{\begin{remark}}
\newcommand\er{\end{remark}}
\newcommand\bt{\begin{theorem}}
\newcommand\et{\end{theorem}}
\newcommand\bp{\begin{proposition}}
\newcommand\ep{\end{proposition}}
\newcommand\bc{\begin{corollary}}
\newcommand\ec{\end{corollary}}
\newcommand\bl{\begin{lemma}}
\newcommand\el{\end{lemma}}
\newcommand\pf{\begin{proof}}
\newcommand\bC{{\mathbb C}}
\newcommand\bK{{\mathbb K}}
\newcommand\bR{{\mathbb R}}
\newcommand\bN{{\mathbb N}}
\newcommand\cR{{\cal R}}
\newcommand{\F}{\mbox{$\mathcal{F}$}}
\def\d{{\rm d}}
\begin{document}

\title{The Vertical Recursive Relation of Riordan Arrays and Their Matrix Representation}
\author{Tian-Xiao He\\
{\small Department of Mathematics}\\
{\small Illinois Wesleyan University}\\
{\small Bloomington, IL 61702-2900, USA}\\
}
\date{}

\maketitle

\begin{abstract}
\noindent 
A vertical recursive relation approach to Riordan arrays is induced, while the horizontal recursive relation is represented by $A$- and $Z$-sequences. 
This vertical recursive approach gives a way to represent the entries of a Riordan array $(g,f)$ in terms of a recursive linear combinations of the coefficients of $g$. A matrix representation of the vertical recursive relation is also given. The set of all those matrices forms a group, called the quasi-Riordan group.  The extensions of the horizontal recursive relation and the vertical recursive relation in terms of $c$- and $C$- Riordan arrays are defined with illustrations by using the rook triangle and the Laguerre triangle. Those extensions represent a way to study nonlinear recursive relations of the entries of some triangular matrices from linear recursive relations of the entries of Riordan arrays. In addition, the matrix representation of the vertical recursive relation of Riordan arrays provides transforms between lower order and high order finite Riordan arrays, where the $m$th order Riordan array is defined by $(g,f)_m=(d_{n,k})_{m\geq n,k\geq 0}$. Furthermore, the vertical relation approach to Riordan arrays provides a unified approach to construct identities.

\vskip .2in \noindent AMS Subject Classification: 05A15, 05A05, 15B36, 15A06, 05A19, 11B83.

\vskip .2in \noindent \textbf{Key Words and Phrases:} Riordan array, Riordan group, quasi-Riordan array, vertical recursive relation, $A$-sequence, $Z$-sequence, rook triangle, Laguerre triangle, Fuss-Catalan numbers and matrix, Catalan numbers and matrix, $(c)$-Riordan arrays, $(C)$-Riordan arrays.
\end{abstract}

\pagestyle{myheadings} 
\markboth{T. X. He}
{The Vertical Recursive Relation of Riordan Arrays}

\section{Introduction}

Riordan matrices are infinite, lower triangular matrices defined by the generating function of their columns. With the matrix multiplication, they form a group, called {\em the Riordan group} (see Shapiro, Getu, Woan and Woodson \cite{SGWW}).

More formally, let us consider the set of formal power series ring $\F = K[\![$$t$$]\!]$, where ${\bK}$ is the field of ${\bR}$ or ${\bC}$. The \emph{order} of $f(t)  \in \F$, $f(t) =\sum_{k=0}^\infty f_kt^k$ ($f_k\in {\bK}$), is the minimum number $r\in\bN_0$ such that $f_r \neq 0$, where ${\bN}_0={\bN}\cup \{0\}$ and ${\bN}$ is the positive integer set. We denote by $\F_r$ the set of formal power series of order $r$. Let $g(t) \in \F_0$ and $f(t) \in \F_1$; the pair $(g,\,f )$ defines the {\em (proper) Riordan matrix} $D=(d_{n,k})_{n,k\in \bN_0}=(g, f)$ having
  
\begin{equation}\label{Radef}
d_{n,k} = [t^n]g(t) f(t)^k
\end{equation}
or, in other words, having $g f^k$ as the generating function of the $k$th column of $(g,f)$.  The {\it first fundamental theorem of Riordan matrices } means the action of the proper Riordan matrices on the formal power series presented by  

\[
(g(t), f(t)) h(t)=g(t) (h\circ f)(t),
\]
which can be abbreviated as $(g,f)h=gh(f)$. Thus we immediately see that the usual row-by-column product of two Riordan matrices is also a Riordan matrix:
\begin{equation}\label{Proddef}
    (g_1,\,f_1 )  (g_2,\,f_2 ) = (g_1 g_2(f_1),\,f_2(f_1)).
\end{equation}
The Riordan matrix $I = (1,\,t)$ is the identity matrix because its entries are $d_{n,k} = [t^n]t^k=\delta_{n,k}$.

Let $(g\left( t\right),\,f(t)) $ be a Riordan matrix. Then its inverse is

\begin{equation}
(g\left( t\right),\,f(t))^{-1}=\left( \frac{1}{g(\overline{{f}}(t))},
\overline{f}(t),\right)  \label{Invdef}
\end{equation}%
where $\overline {f}(t)$ is the compositional inverse of $f(t)$, i.e., $(f\circ 
\overline{f})(t)=(\overline{f}\circ f)(t)=t$. In this way, the set $\mathcal{
R}$ of all proper Riordan matrices forms a group (see \cite{SGWW}) called the Riordan group,

Here is a list of six important subgroups of the Riordan group (see \cite{SGWW, Sha, Barry, SSBCHMW}).

\begin{itemize}
\item the {\it Appell subgroup} $\{ (g(z),\,z)\}$.
\item the {\it Lagrange (associated) subgroup} $\{(1,\,f(z))\}$.
\item the {\it $k$-Bell subgroup} $\{(g(z),\, z(g(z))^k)\}$, where $k$ is a fixed positive integer. 
\item the {\it hitting-time subgroup} $\{(zf'(z)/f(z),\, f(z))\}$.
\item the {\it derivative subgroup} $\{ (f'(z), \, f(z))\}$.
\item the {\it checkerboard subgroup} $\{ (g(z),\, f(z))\},$ where $g$ is an even function and $f$ is an odd function. 
\end{itemize}

The $1$-Bell subgroup is referred to as the Bell subgroup for short, and the Appell subgroup can be considered as the $0$-Bell subgroup if we allow $k=0$ to be included in the definition of the $k$-Bell subgroup.

Let $G$ be a group, and let $H$ and $N$ be two subgroups of $G$ with $N$ normal. Then the following statements are equivalent: 

\begin{itemize}
\item $NH=G$ and $N\cap H={e}$.
\item Every $g\in G$ can be written uniquely as $g=nh$, where $n\in N$ and $h\in H$.
\item Define $\psi: H\to G/N$ by $\psi(h)=Nh$, $h\in H$. Then $\psi$ is an isomorphism. 
\end{itemize}
If these conditions hold, we write $G=N\rtimes H$ and say that $G$ is expressed as an semidirect product of $N$ and $H$. Since for every 
$(g,f)\in \mathcal {R}$ 

\[
(g,f)=(g,t)(1,f),
\]
where $(g,t)\in \mathcal{A}$ and $(1,f)\in \mathcal {L}$ and $\mathcal{A}$ and $\mathcal {L}$ are Appell subgroup, the normal subgroup of $\mathcal{R}$, and the Lagrange subgroup, respectively. 

An infinite lower triangular matrix $[d_{n,k}]_{n,k\in{\bN_0}}$ is a Riordan matrix if and only if a unique sequence $A=(a_0\not= 0, a_1, a_2,\ldots)$ exists such that for every $n,k\in{\bN_0}$  
\be\label{eq:1.1}
d_{n+1,k+1} =a_0 d_{n,k}+a_1d_{n,k+1}+\cdots +a_{n-k}d_{n,n}. 
\ee 
This is equivalent to 
\be\label{eq:1.2}
f(t)=tA(f(t))\quad \text{or}\quad t=\bar f(t) A(t).
\ee
Here, $A(t)$ is the generating function of the $A$-sequence. The first formula of \eqref{eq:1.2} is also called the {\it second fundamental theorem of Riordan matrices}. Moreover, there exists a unique sequence $Z=(z_0, z_1,z_2,\ldots)$ such that every element in column $0$ can be expressed as the linear combination 
\be\label{eq:1.3}
d_{n+1,0}=z_0 d_{n,0}+z_1d_{n,1}+\cdots +z_n d_{n,n},
\ee
or equivalently,
\be\label{eq:1.4}
g(t)=\frac{1}{1-tZ(f(t))},
\ee
in which and thoroughly we always assume $g(0)=g_0=1$, a usual hypothesis for proper Riordan matrices with normalization. From \eqref{eq:1.4}, we may obtain 

\[
Z(t)=\frac{g(\bar f(t))-1}{\bar f(t)g(\bar f(t))}.
\]

$A$- and $Z$-sequence characterizations of Riordan matrices were introduced, developed, and/or studied in Merlini, Rogers, Sprugnoli, and Verri \cite{MRSV}, Roger \cite{Rog}, Sprugnoli and the author \cite{HS}, Cheon and Jin \cite{CJ}, Cheon, Luz\'on, Mor\'on, Prieto Martinez, and Song \cite{CLMPMS}, \cite{He15}, Jean-Louis and Nkwanta \cite{JLN}, Luz\'on, Mor\'on, and Prieto-Martinez \cite{LMPM17, LMPM}, etc. In \cite{HS} the expressions of the $A$- and $Z$-sequences of the product depend on the analogous sequences of the two given factors.  

The Catalan numbers, $C_{n}=\binom{2n}{n}/(n+1)$, are a sequence of integers
that occur in many counting situations. The books \textit{Enumerative
Combinatorics,} Volume 2 and the more recent \textit{Catalan Numbers} by
Richard Stanley \cite{Stanley} and \cite{Sta} give a wealth of information.
The generating function, denoted by $C(z)$, of the Catalan numbers can be
written as

\begin{equation*}
C(t)=\sum_{n=0}^{\infty }C_{n}t^{n}=\frac{1-\sqrt{\mathstrut 1-4t}}{2t},
\end{equation*}
which is equivalent to the functional equation $C(t)=1+tC(t)^{2}$. It can be
shown that \cite{GKP}

\begin{equation*}
C(t)^{k}=\sum_{n=0}^{\infty }\frac{k}{2n+k}\binom{2n+k}{n}t^{n}.  
\end{equation*}

A Dyck path of length $2n$ is a path in the plane lattice $\mathbb{Z}\times 
\mathbb{Z}$ from the origin $(0,0)$ to $(2n,0)$, made up with steps $(1,1)$
and $(1,-1).$ The other requirement is that a path can never go below the $x$%
-axis. We refer to $n$ as the semilength of the path. It is well known that
the number of Dyck paths of semilength $n$ is the $n$-th Catalan number $%
C_{n}$. A partial Dyck path is a Dyck path without requiring that end point
be on the $x$-axis. Hence, 

\be\label{1.8-3}
C(n,k):=[t^n]C(t)^k=\frac{k}{2n+k}\binom{2n+k}{n}
\ee
is the number of the partial Dyck paths from $(0,0)$ to $(2n+k,k)$.

For a positive integer $m$, an $m$-Dyck path is a path from the origin to $%
(mn,0)$ using the steps $(1,1)$ and $(1,1-m)$ and again not going below the $%
x$-axis. We refer to $mn$ as the length of the path. A partial $m$-Dyck path
is defined as an $m$-partial Dyck path. It is well known that the number of $%
m$-Dyck paths of length $mn$ is (see, for example, \cite{GKP})

\begin{equation*}
F_{m}(n,1)=\frac{1}{mn+1}\binom{mn+1}{n}, 
\end{equation*}
the Fuss-Catalan numbers. For $m=2$, the Fuss-Catalan numbers are the
Catalan numbers $F_{2}(n,1)$. More generally, the Fuss-Catalan
numbers are

\begin{equation}\label{1.8-4}
F_m(n,r):=\frac{r}{mn+r}\binom{mn+r}{n},
\end{equation}
which are named after N. I. Fuss and E. C. Catalan (see \cite%
{FL,GKP,LP,Mlo,PZ, He12, HS17}). The Fuss-Catalan numbers have many combinatorial
applications (see, for example, Shapiro and the author \cite{HS17}). 

The generating function $F_{m}(t)$ for the Fuss-Catalan numbers, $%
\{F_{m}(n,$ $1)\}_{n\geq 0}$ is called the generalized binomial series in \cite%
{GKP}, and it satisfies the function equation $F_{m}(t)=1+tF_{m}(t)^{m}$.
Hence from Lambert's formula for the Taylor expansion of the powers of $%
F_{m}(t)$ (see \cite{GKP}), we have that

\begin{equation}
F_{m}^{r}\equiv F_{m}(t)^{r}=\sum_{n\geq 0}\frac{r}{mn+r}\binom{mn+r}{n}t^{n}
\label{1.4}
\end{equation}%
for all $r\in {{\mathbb{R}}}$. The key case\eqref{1.4} leads the
following formula for $F_{m}(t)$:

\begin{equation}
F_{m}(t)=1+tF_{m}^{m}(t).  \label{1.5}
\end{equation}%
Actually,

\begin{eqnarray*}
1+tF_{m}^{m}(t) &=&1+\sum_{n\geq 0}\frac{m}{mn+m}\binom{mn+m}{n}t^{n+1} \\
&=&1+\sum_{n\geq 1}\frac{m}{mn}\binom{mn}{n-1}t^{n} \\
&=&\sum_{n\geq 0}\frac{1}{mn+1}\binom{mn+1}{n}t^{n}=F_{m}(t).
\end{eqnarray*}%
For the cases $m=1$ and $2$, we have $F_{1}=1/(1-t)$ and $F_{2}=C(z)$,
respectively. When $m=3$, the Fuss-Catalan numbers $\left( F_{3}\right) _{n}$
form the sequence $A001764$ (see \cite{Sloane}), $1,1,3,12,55,273,1428,%
\ldots $, the ternary numbers. The ternary numbers count the number of $3$%
-Dyck paths or ternary paths. The generating function of the ternary numbers
is denoted as $T(t)=\sum_{n=0}^{\infty }T_{n}t^{n}$ with $T_{n}=\frac{1}{3n+1%
}\binom{3n+1}{n}$, and is given equivalently by the equation

\begin{equation*}
T(t)=1+tT(t)^{3}.  
\end{equation*}

\section{A vertical recursive relation view of Riordan arrays}

In a Riordan array $(g,f)=(d_{n,k})_{n,k\geq 0}$, the first column ($0$th column), with its generating function $g(t)$, usually possesses an interesting combinatorial interpretation or represents an important sequence, while the other columns might be considered as the compositions of the first column in the view of the following recurrence relation:

\be\label{2.1}
d_{n,k}=[t^n]gf^k=\sum^n_{j= 1} f_j[t^{n-j}]gf^{k-1}=\sum^{n-k+1}_{j=1}f_jd_{n-j,k-1}
\ee
for $n,k\geq 1$, where $f(t)=\sum_{j\geq 1} f_j t^j$. For instance, 

\begin{align*}
&d_{1,1}=f_1 d_{0,0},\\
&d_{2.1}=f_2 d_{0,0}+f_1d_{1,0},\,\, d_{2,2}=f_1d_{1,1},\\
&d_{3,1}=f_3d_{0,0}+f_2d_{1,0}+f_1d_{2.0},\,\, d_{3,2}=f_2d_{2,1}+f_1d_{1,1}, \,\, d_{3,3}=f_1d_{2,2},\ldots
\end{align*}

\begin{example}\label{ex:2.1}
For the Pascal triangle $(1/(1-t),t/(1-t))$, from \eqref{2.1} we obtain the well-known recursive relation 

\be\label{2.1.2}
\binom{n}{k}=\sum^{n-k+1}_{j=1}\binom{n-j}{k-1}.
\ee
\end{example}

Recurrence relation \eqref{2.1} provides a resource for constructing identities and an algorithm for computing powers and multiplications of formal power series. The latter approach may simplifies the corresponding computation process by using Fa\`a di Bruno's formula. 

\begin{theorem}\label{thm:2.1}
Let $(g,f)=(d_{n,k})_{n,k\geq 0}$ be a Riordan array, and let $C(n,k)$ and $F_m(n,r)$ be defined by \eqref{1.8-3} and  \eqref{1.8-4}, respectively. 
Then we obtain the recurrence relation \eqref{2.1}. Particularly, for $g=F_m$ we have 

\begin{align}\label{2.2}
&\frac{k+1}{m(n-k)+k+1}\binom{m(n-k)+k+1}{n-k}\nonumber\\
=&\sum^{n-k}_{j=0}\frac{k}{(mj+1)(m(n-j-k)+k)}\binom{mj+1}{j}\binom{m(n-j-k)+k}{n-k-j}.
\end{align}
If $m=2$, then $g=F_2=C$ and \eqref{2.2} becomes

\begin{align}\label{2.3}
&\frac{k+1}{2n-k+1}\binom{2n-k+1}{n-k}\nonumber\\
=&\sum^{n-k}_{j=0}\frac{k}{(2j+1)(2(n-j-k)+k)}\binom{2j+1}{j}\binom{2(n-j-k)+k}{n-k-j},
\end{align}
or equivalently, 

\be\label{2.4}
C(n-k,k+1)=\sum^{n-k}_{j=0}C(j,1)C(n-j-k,k),
\ee
where $C(m,\ell)$ are defined by \eqref{1.8-3}, and $C(j,1)=[t^j]C(t)$, the $j$th Catalan number. 
\end{theorem}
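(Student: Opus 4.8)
The plan is to read off \eqref{2.1} directly from the column generating functions and then to specialize the resulting recurrence to a single, carefully chosen Riordan array. For the first assertion I would write $d_{n,k}=[t^n]gf^k=[t^n]f\cdot(gf^{k-1})$ and expand the product. Since $f\in\F_1$ has the form $f(t)=\sum_{j\geq 1}f_jt^j$, the Cauchy product gives $d_{n,k}=\sum_{j=1}^{n}f_j[t^{n-j}]gf^{k-1}=\sum_{j=1}^n f_jd_{n-j,k-1}$; because $gf^{k-1}$ has order $k-1$, the term $d_{n-j,k-1}$ vanishes as soon as $n-j<k-1$, which truncates the sum at $j=n-k+1$ and yields \eqref{2.1}. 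This is exactly the computation displayed just before the statement, so no further work is needed for this part.

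The heart of the argument is to recognize \emph{which} Riordan array produces \eqref{2.2}. I claim the correct choice is the Bell-type array $(g,f)=(F_m,tF_m)$, a member of the $1$-Bell subgroup $\{(g,zg)\}$, where $F_m$ is the generalized binomial series satisfying \eqref{1.5}. First I would check that $tF_m\in\F_1$, so the pair is a proper Riordan array. Its entries are $d_{n,k}=[t^n]F_m(tF_m)^k=[t^{n-k}]F_m^{k+1}$, and by Lambert's expansion \eqref{1.4} with $r=k+1$ and index $n-k$ this equals $\frac{k+1}{m(n-k)+k+1}\binom{m(n-k)+k+1}{n-k}$, precisely the left-hand side of \eqref{2.2}. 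This identification is the step I expect to be the main obstacle: formula \eqref{2.2} does not advertise its Riordan-array origin, and one must first rewrite the power $F_m^{k+1}$ in the shape $g\cdot f^k$ with $g=F_m$ and $f=tF_m$ before the vertical recurrence can be applied.

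With the array fixed, the remainder is bookkeeping. The coefficients of $f$ are $f_j=[t^j]tF_m=[t^{j-1}]F_m=F_m(j-1,1)$ for $j\geq 1$, again by \eqref{1.4} with $r=1$, while the shifted entries are $d_{n-j,k-1}=[t^{n-j-k+1}]F_m^{k}=F_m(n-j-k+1,k)$. Substituting these into \eqref{2.1} and reindexing by $i=j-1$, so that $i$ runs from $0$ to $n-k$, turns the truncated sum into $\sum_{i=0}^{n-k}F_m(i,1)F_m(n-i-k,k)$. Writing $F_m(i,1)=\frac{1}{mi+1}\binom{mi+1}{i}$ and $F_m(n-i-k,k)=\frac{k}{m(n-i-k)+k}\binom{m(n-i-k)+k}{n-i-k}$ through \eqref{1.8-4} reproduces exactly the right-hand side of \eqref{2.2}. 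As a sanity check, \eqref{2.2} is merely the coefficient form of the convolution $F_m^{k+1}=F_m\cdot F_m^{k}$, which is the algebraic content hiding behind the vertical recurrence.

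Finally, the two special cases demand no new ideas: setting $m=2$ and using $F_2=C$ from \eqref{1.5} rewrites \eqref{2.2} as \eqref{2.3}, and re-encoding both sides through the notation of \eqref{1.8-3}---with $C(j,1)=[t^j]C(t)$ the $j$th Catalan number---collapses \eqref{2.3} into the compact convolution identity \eqref{2.4}.
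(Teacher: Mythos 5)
Your proof is correct and takes essentially the same route as the paper: the recurrence \eqref{2.1} is obtained there by the identical Cauchy-product and order-truncation argument placed just before the theorem, and the Fuss--Catalan case is the specialization to the Bell-type array $(F_m,tF_m)$, which the paper leaves implicit (it gives no separate proof of the theorem). Your explicit identification $f=tF_m$---so that $d_{n,k}=[t^{n-k}]F_m^{k+1}$ by \eqref{1.4} and \eqref{2.2} is exactly the coefficient form of $F_m^{k+1}=F_m\cdot F_m^{k}$---correctly supplies the one detail the statement omits.
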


\begin{theorem}\label{thm:2.2}
Let $(g,f)=(d_{n,k})_{n,k\geq 0}$ be a Riordan array. Then its entries $d_{n,k}$, $n,k\geq 0$, can be represented recursively by 
\begin{align}\label{2.5}
d_{n,k}=&\sum^{n-(k-1)}_{i_k=1}f_{i_k}\sum^{n-(k-2)-i_k}_{i_{k-1}=1}f_{i_{k-1}}\sum^{n-(k-3)-i_k-i_{k-1}}_{i_{k-2}=1}f_{i_{k-2}}\cdots \nonumber\\
&\sum^{n-i_k-i_{k-1}-i_2}_{i_1=1}f_{i_1}g_{n-i_1-i_2-\cdots -i_k}.
\end{align}
\end{theorem}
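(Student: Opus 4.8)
The plan is to obtain \eqref{2.5} simply by iterating the single-step vertical recurrence \eqref{2.1} exactly $k$ times, which I would formalize as an induction on $k$. The governing intuition is that each application of \eqref{2.1} strips off one summation index together with one unit of the column parameter, leaving an expression of the same shape that is ready for the next application; after $k$ applications the column index has dropped to $0$, and the surviving bottom entry $d_{m,0}=[t^m]g=g_m$ supplies the factor $g_{n-i_1-\cdots-i_k}$. Tracking how the upper summation limits propagate through the iteration is the only delicate point.

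First I would fix the base case $k=1$. Here \eqref{2.1} gives $d_{n,1}=\sum_{j=1}^{n}f_j\,d_{n-j,0}=\sum_{i_1=1}^{n}f_{i_1}g_{n-i_1}$, which is precisely \eqref{2.5} specialized to $k=1$: the single sum runs $i_1$ from $1$ to $n-(k-1)=n$ and carries the tail term $g_{n-i_1}$. (The degenerate case $k=0$ is just $d_{n,0}=g_n$, matching the empty nested sum.) For the inductive step I would assume that \eqref{2.5} holds with $k$ replaced by $k-1$ for every admissible row index, and then apply \eqref{2.1} once at the outermost level,
\[
d_{n,k}=\sum_{i_k=1}^{n-k+1} f_{i_k}\, d_{n-i_k,\,k-1}.
\]
Expanding $d_{n-i_k,\,k-1}$ by the induction hypothesis — that is, substituting $n\mapsto n-i_k$ and $k\mapsto k-1$ throughout the nested sum — reproduces the inner $k-1$ layers of \eqref{2.5}, while the prepended $f_{i_k}$-sum furnishes the outermost layer. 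The residual index at the bottom becomes $(n-i_k)-i_1-\cdots-i_{k-1}=n-i_1-\cdots-i_k$, which is exactly the argument of $g$ in \eqref{2.5}.

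The step I expect to demand the most care is checking that the upper limits telescope correctly so that the displayed formula is reproduced verbatim. When \eqref{2.1} is applied to $d_{n-i_k,\,k-1}$, the next index $i_{k-1}$ inherits the bound $(n-i_k)-(k-1)+1=n-(k-2)-i_k$; iterating, after peeling off $i_k,\ldots,i_{\ell+1}$ the index $i_\ell$ ranges up to $(n-i_k-\cdots-i_{\ell+1})-\ell+1$, which for $\ell=1$ collapses to $n-i_k-\cdots-i_2$, precisely the top limit of the innermost sum in \eqref{2.5}. Once these bookkeeping identities for the limits are verified, there is no further computation: the induction closes and \eqref{2.5} follows.
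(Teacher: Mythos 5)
Your proof is correct and coincides with the paper's intended argument: the paper states Theorem \ref{thm:2.2} without a written proof, treating it as the immediate result of iterating the vertical recurrence \eqref{2.1} until the column index reaches $0$, which is exactly what your induction on $k$ formalizes. Your bookkeeping of the upper limits ($i_\ell$ ranging up to $n-i_k-\cdots-i_{\ell+1}-\ell+1$) and of the terminal factor $d_{m,0}=g_m$ checks out, so nothing is missing.
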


Let $A$ and $B$ be $m\times m$ and $n\times n$ matrices, respectively. The we define the direct sum of $A$ and $B$ by 

\be\label{2.5+2}
A\oplus B=\left[ \begin{matrix} A & 0\\ 0 & B\end{matrix}\right]_{(m+n)\times (m+n)}.
\ee

\begin{definition}\label{def:2.3}
Let $g\in \F_0$ with $g(0)=1$ and $f\in \F_1$. We call the following matrix a quasi-Riordan array and denote it by $[g,f]$.

\be\label{2.6}
[g,f]:=(g,f,tf,t^2f,\ldots),
\ee
where $g$, $f$, $tf$, $t^2f\cdots$ are the generating functions of the $0$th, $1$st, $2$nd, $3$rd, $\cdots$, columns of the matrix $[g,f]$, respectively. It is clear that $[g,f]$ can be written as 

\be\label{2.8}
[g,f]=\left( \begin{matrix} g(0) & 0\\ (g-g(0))/t & (f,t)\end{matrix}\right),
\ee
where $(f,t)=(f,tf,t^2f,t^3f,\ldots)$. Particularly, if $f=tg$, then the quasi-Riordan array $[g,tg]=(g, t)$, a Appell-type Riordan array. 
\end{definition}

Note that $[g,f]$ defined by \eqref{2.6} is not the Riordan-Krylov matrix defined in \cite{CS}, which relationship is worth being 
investigated. 

\begin{theorem}\label{thm:2.4}
Let $(g,f)=(d_{n,k})_{n,k\geq 0}$ be a Riordan array, and let $([1]\oplus (g,f))$ and $[g,f]$ be defined by \eqref{2.5} and \eqref{2.6}, respectively. Then $(g,f)$ has the expression

\be\label{2.9}
(g,f)=[g,f]([1]\oplus (g,f)).
\ee
\end{theorem}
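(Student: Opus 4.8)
The plan is to verify the claimed factorization entry by entry, reducing it to the vertical recurrence \eqref{2.1} that is already in hand. First I would record the entries of the two factors. By Definition \ref{def:2.3} the quasi-Riordan array $[g,f]$ has column $0$ with generating function $g$ and column $k$ with generating function $t^{k-1}f$ for $k\ge1$, so its entries are
\begin{align*}
[g,f]_{n,0}=[t^n]g=g_n,\qquad [g,f]_{n,k}=[t^n]\bigl(t^{k-1}f\bigr)=[t^{n-k+1}]f=f_{n-k+1}\quad(k\ge1),
\end{align*}
which vanish unless $0\le k\le n$, so $[g,f]$ is lower triangular. From the direct-sum definition \eqref{2.5+2}, the matrix $[1]\oplus(g,f)$ has $(0,0)$-entry $1$, vanishing first row and first column apart from that corner, and $\bigl([1]\oplus(g,f)\bigr)_{n,k}=d_{n-1,k-1}$ for $n,k\ge1$. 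Both factors being lower triangular, the product is well defined by finite sums.

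Next I would compute the $(n,k)$-entry of $P:=[g,f]\,([1]\oplus(g,f))$ through $P_{n,k}=\sum_{j\ge0}[g,f]_{n,j}\bigl([1]\oplus(g,f)\bigr)_{j,k}$, splitting on the value of $k$. For $k=0$ only the corner index $j=0$ contributes, giving $P_{n,0}=[g,f]_{n,0}\cdot1=g_n=d_{n,0}$, as required since $d_{n,0}=[t^n]g$. For $k\ge1$ the $j=0$ term drops out because $\bigl([1]\oplus(g,f)\bigr)_{0,k}=0$, leaving only $j\ge1$, whence
\begin{align*}
P_{n,k}=\sum_{j\ge1}f_{n-j+1}\,d_{j-1,k-1}=\sum_{m\ge0}f_{n-m}\,d_{m,k-1}
\end{align*}
after the substitution $m=j-1$, the nonvanishing range being $k-1\le m\le n-1$.

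The final step is to recognize this as \eqref{2.1}. Re-indexing the recurrence $d_{n,k}=\sum_{j=1}^{n-k+1}f_j\,d_{n-j,k-1}$ by $m=n-j$ turns it into $d_{n,k}=\sum_{m=k-1}^{n-1}f_{n-m}\,d_{m,k-1}$, which is exactly the sum just obtained for $P_{n,k}$. Hence $P_{n,k}=d_{n,k}$ for all $n,k\ge0$, proving $[g,f]\,([1]\oplus(g,f))=(g,f)$.

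I expect the only delicate point to be the bookkeeping of the two index shifts carried by the two factors, which must cancel exactly: the $k$th column of $[g,f]$ carries $t^{k-1}f$ rather than $t^{k}f$, while $[1]\oplus(g,f)$ shifts the Riordan block by one in both indices. It is precisely the interplay of the ``$-1$'' in $f_{n-k+1}$ with the ``$d_{j-1,k-1}$'' shift that reproduces the summation limits of \eqref{2.1}; making these limits align (rather than come out off by one) is the crux, and it is worth confirming on the Pascal case $(1/(1-t),t/(1-t))$, where $[g,f]$ is the all-ones lower triangular matrix and the identity collapses to the hockey-stick summation \eqref{2.1.2}.
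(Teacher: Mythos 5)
Your proof is correct and takes essentially the same approach as the paper's: both compute the product $[g,f]\,([1]\oplus(g,f))$ entrywise and identify the resulting sums with the vertical recurrence \eqref{2.1}. Your version simply makes the index bookkeeping explicit (including the correct reference to \eqref{2.5+2} for the direct sum) where the paper displays the matrices and invokes \eqref{2.1} in the final step.
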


\begin{proof} We write the formal power series $g$ and $f$ in the Riordan array $(g,f)$ as $g=\sum_{n\geq 0} g_n t^n$ and $f=\sum_{n\geq 1} f_n t^n$. Then
\begin{align}\label{2.10}
&\left( \begin{matrix} g(0) & 0\\ (g-g(0))/t & (f,t)\end{matrix}\right)([1]\oplus (g,f))\nonumber\\
=&\left( \begin{matrix} d_{0,0} & & & &  \\d_{1,0} &f_1 & & &  \\d_{2,0} &f_2 & f_1 & & \\ d_{3,0} & f_3 & f_2 & f_1 & \\
d_{4,0}& f_4& f_3 & f_2& f_1\\\vdots & \vdots& \vdots& \vdots &\ddots \end{matrix}\right) \left( \begin{matrix} 1 & & & &  \\0 &d_{0,0} & & &  \\0 &d_{1,0} & d_{1,1} & & \\ 0 & d_{2,0} & d_{2,1} & d_{2,2} & \\0& d_{3,0}&d_{3,1}& d_{3,2} & d_{3,3}\\\vdots & \vdots& \vdots& \vdots &\ddots\end{matrix}\right) \nonumber\\
=& \left( \begin{array} {lllll}d_{0,0} & & & & \\d_{1,0} & f_1d_{0,0} & & &\\  d_{2,0} & f_2d_{0,0}+f_1d_{1,0} & f_1d_{1,1} & &\\d_{3,0}&f_3d_{0,0}+f_2d_{1,0}+f_1d_{2,0}& f_2d_{1,1}+f_1d_{2,1}& f_1d_{2,2}&\\ 
\vdots & \vdots& \vdots& \vdots &\ddots\end{array}\right)\nonumber\\
=&\left( \begin{matrix} d_{0,0} & & & & \\d_{1,0} & d_{1,1} & & &\\  d_{2,0} & d_{2,1} & d_{2,2} & &\\d_{3,0}&d_{3,1}& d_{3,2} & d_{3,3}& \\d_{4,0} &d_{4,1}& d_{4,2}& d_{4,3} & d_{4,4}\\\vdots & \vdots& \vdots& \vdots &\ddots\end{matrix}\right),
\end{align}
where the last step follows \eqref{2.1}, which completes the proof of \eqref{2.6}.
\end{proof}

For an integer $r\geq 0$ denote the $r$-th truncations of a power series $h=\sum_{n\geq 0} h_n t^n$ by $h|_r:=\sum^r_{n=0} h_nt^n$.

\begin{corollary}\label{cor:2.5}
Let $(g,f)=(d_{n,k})_{n,k\geq 0}$ be a Riordan array with $g=\sum_{n\geq 0} g_n t^n$ and $f=\sum_{n\geq 1} f_n t^n$, and let $(g,f)_n$ be the $n$th order leading principle submatrix of $(g,f)$. Then we have the recursive relation of $(g,f)_n$ in the following form:

\be\label{2.11}
(g,f)_n=[g,f]_n([1]\oplus (g,f)_{n-1}),
\ee
where $[g,f]_n$ is the $n$th order leading principle submatrix of the quasi-Riordan array $[g,f]$ defined by \eqref{2.5}, namely 

\be\label{2.12}
[g,f]_n=\left( \begin{matrix} g(0) & 0\\ ((g-g(0))/t)|_{n-1} & (f,t)_{n-1}\end{matrix}\right),
\ee 
where the $n-1$st truncation of $(g-g(0)/t)|_{n-1}$ is $(g-g(0)/t)|_{n-1}=\sum^{n-1}_{k=1}g_kt^{k-1}$, and $(f,t)_{n-1}=(f, tf, t^2f, \ldots, t^{n-1} f).$ We call $[g,f]_n$ the recursive matrix of the Riordan array $(g,f)$. 
\end{corollary}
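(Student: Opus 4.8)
The plan is to derive \eqref{2.11} directly from the infinite identity \eqref{2.9} of Theorem~\ref{thm:2.4} by passing to leading principal submatrices, rather than recomputing the product from scratch. The whole argument rests on one structural feature: the three matrices in \eqref{2.9}, namely $(g,f)$, the quasi-Riordan array $[g,f]$, and the shifted array $[1]\oplus(g,f)$, are all infinite lower triangular matrices. For such matrices truncation is compatible with multiplication, so it suffices to identify the leading $n\times n$ blocks of each factor.

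First I would record the truncation lemma: if $P$ and $Q$ are infinite lower triangular matrices and $P_n,Q_n$ denote their leading $n\times n$ principal submatrices, then $(PQ)_n=P_nQ_n$. This is immediate from the entrywise formula $(PQ)_{i,j}=\sum_{k\geq 0}P_{i,k}Q_{k,j}$: for $0\leq i,j\leq n-1$ lower triangularity of $P$ forces $P_{i,k}=0$ whenever $k>i$, hence whenever $k\geq n$, so the summation range collapses to $0\leq k\leq n-1$ and reproduces exactly the $(i,j)$ entry of the finite product $P_nQ_n$. Applying this with $P=[g,f]$ and $Q=[1]\oplus(g,f)$ turns \eqref{2.9} into $(g,f)_n=[g,f]_n\,\bigl([1]\oplus(g,f)\bigr)_n$.

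It then remains to match the two blocks on the right with the objects in \eqref{2.11} and \eqref{2.12}. The block $[g,f]_n$ is by definition the leading $n\times n$ submatrix of \eqref{2.6}, whose first column is $g(0)$ followed by $g_1,\dots,g_{n-1}$ and whose $k$th column ($k\geq1$) carries the shifted coefficients of $f$; reading this off the partition in \eqref{2.8} yields exactly the block form \eqref{2.12}, and here I would double-check the column counts in the trailing block $(f,t)_{n-1}$ against the stated convention that the $n$th order submatrix is $n\times n$. The genuinely delicate point is the factor $\bigl([1]\oplus(g,f)\bigr)_n$: because the direct sum shifts $(g,f)$ one step down and to the right and inserts a $1$ in position $(0,0)$, its entry at $(i,j)$ with $i,j\geq1$ is $d_{i-1,j-1}$, so the leading $n\times n$ block only involves $d_{i-1,j-1}$ for $i-1,j-1\leq n-2$, i.e. the leading $(n-1)\times(n-1)$ block $(g,f)_{n-1}$ of $(g,f)$. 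Thus $\bigl([1]\oplus(g,f)\bigr)_n=[1]\oplus(g,f)_{n-1}$, which is precisely why the order drops from $n$ to $n-1$ on the right of \eqref{2.11}.

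I expect the main obstacle to be purely bookkeeping rather than conceptual: keeping the shift-by-one in the direct-sum factor consistent with the index conventions, and making sure the three dimension labels ($n$ for $(g,f)_n$ and $[g,f]_n$, $n-1$ for $(g,f)_{n-1}$) line up so that the finite product in \eqref{2.11} is well defined. As a consistency check, one could alternatively verify \eqref{2.11} by computing the finite product $[g,f]_n([1]\oplus(g,f)_{n-1})$ column by column and recovering $(g,f)_n$ from the vertical recurrence \eqref{2.1}, exactly as in the display \eqref{2.10} of Theorem~\ref{thm:2.4} but truncated after row and column $n-1$; this bypasses Theorem~\ref{thm:2.4} entirely and confirms that no convergence or tail issue has been swept under the rug.
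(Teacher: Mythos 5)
Your proposal is correct and follows exactly the route the paper intends: the corollary is stated without a separate proof precisely because it is the truncation of the identity \eqref{2.9} from Theorem~\ref{thm:2.4}, and your lemma that $(PQ)_n=P_nQ_n$ for lower triangular $P$, together with the observation $\bigl([1]\oplus(g,f)\bigr)_n=[1]\oplus(g,f)_{n-1}$, supplies the two steps the paper leaves implicit. Your care with the dimension conventions is warranted (the paper's own index bookkeeping in \eqref{2.12} is slightly off, e.g.\ the column count of $(f,t)_{n-1}$ versus the truncation $\sum_{k=1}^{n-1}g_kt^{k-1}$), but this does not affect the validity of your argument.
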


Mao, Mu, and Wang \cite{MMW} uses \eqref{2.12} gives another interesting criterion for the total positivity of Riordan arrays. 

In the next section, we will show that the set of all quasi-Riordan arrays forms a group, called the quasi-Riordan group. 

\section{The quasi-Riordan group}

Denote by ${\cal R}_r$ the set of all quasi-Riordan arrays defined by \eqref{2.6}. 
In this section, we will show ${\cal R}_r$ is a group with respect to regular matrix multiplication. 

\begin{proposition}\label{pro:4.1}
Let $[g,f]$, $[d,h]\in {\cal R}_r$, and let $u=\sum_{n\geq 0} u_n t^n\in \F_0$. Then there holds the first fundamental theorem for quasi-Riordan arrays (FFTQRA)

\be\label{4.1}
[g,f]u=gu(0)+\frac{f}{t}(u-u(0)),
\ee
which implies 

\be\label{4.2}
[g,f][d,h]=\left[g+\frac{f}{t}(d-1), \frac{fh}{t}\right].
\ee

Hence, $[1,t]$ is the identity of ${\cal R}_r$. 
\end{proposition}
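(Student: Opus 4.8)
The plan is to establish the matrix–vector identity \eqref{4.1} first, and then read off \eqref{4.2} by applying it column by column. The starting point is Definition \ref{def:2.3}: the quasi-Riordan array $[g,f]$ has $g$ as the generating function of its $0$th column and $t^{k-1}f$ as the generating function of its $k$th column for each $k\geq 1$. Writing $u=\sum_{n\geq 0}u_nt^n$, the product $[g,f]u$ is the combination of columns $\sum_{k\geq 0}u_k\,(\text{column }k)$, which equals $u_0 g+\sum_{k\geq 1}u_k t^{k-1}f$. Since $\sum_{k\geq 1}u_kt^{k-1}=(u-u(0))/t$ and $u_0=u(0)$, this collapses directly to $gu(0)+\frac{f}{t}(u-u(0))$, giving \eqref{4.1}.

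To obtain \eqref{4.2}, I would apply \eqref{4.1} to each column of $[d,h]$ in turn, using that the columns of $[d,h]$ have generating functions $d,h,th,t^2h,\ldots$ (again by Definition \ref{def:2.3}). For the $0$th column I would set $u=d$; since $d\in\F_0$ with $d(0)=1$ by the standing normalization, \eqref{4.1} yields $g+\frac{f}{t}(d-1)$, which is the $0$th column of the product. For the $k$th column with $k\geq 1$ I would set $u=t^{k-1}h$; the key bookkeeping observation is that $u(0)=0$, because $h\in\F_1$ forces $h(0)=0$ and hence $t^{k-1}h$ carries no constant term. Then \eqref{4.1} gives $\frac{f}{t}\,t^{k-1}h=t^{k-1}\cdot\frac{fh}{t}$. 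Thus the product has $g+\frac{f}{t}(d-1)$ as its $0$th column and $t^{k-1}\cdot\frac{fh}{t}$ as its $k$th column for $k\geq 1$, which is exactly the column description of the quasi-Riordan array $[g+\frac{f}{t}(d-1),\,\frac{fh}{t}]$, proving \eqref{4.2}.

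It remains to confirm closure and identify the identity. I would check that the output of \eqref{4.2} is again an admissible quasi-Riordan array: the first entry $G=g+\frac{f}{t}(d-1)$ satisfies $G(0)=g(0)=1$, because $f/t\in\F_0$ and $d-1\in\F_1$ make the correction term of order $\geq 1$, while $F=fh/t\in\F_1$ since $f,h\in\F_1$ give $fh\in\F_2$. For the identity, observe that $[1,t]$ has columns $1,t,t^2,t^3,\ldots$, i.e.\ it is the identity matrix; substituting into \eqref{4.2} gives $[1,t][d,h]=[1+\frac{t}{t}(d-1),\frac{th}{t}]=[d,h]$ and $[g,f][1,t]=[g+\frac{f}{t}(1-1),\frac{ft}{t}]=[g,f]$, confirming that $[1,t]$ is a two-sided identity.

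The argument is essentially bookkeeping, so I do not expect a deep obstacle; the one place that demands care is the separate treatment of the $0$th column (where the constant $d(0)=1$ survives and produces the $-1$ correction) versus the columns $k\geq 1$ (where $h\in\F_1$ kills the constant term), together with applying the shift operator $u\mapsto(u-u(0))/t$ consistently so that the index $k-1$ in the columns of $[d,h]$ lines up correctly with the index $k-1$ in the columns of the product.
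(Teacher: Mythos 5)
Your proposal is correct and follows essentially the same route as the paper's own proof: derive \eqref{4.1} by expressing $[g,f]u$ as the column combination $u_0g+\sum_{k\geq 1}u_kt^{k-1}f$, then obtain \eqref{4.2} by applying \eqref{4.1} columnwise to $(d,h,th,t^2h,\ldots)$ using $d(0)=1$ and $h(0)=0$, and verify the identity $[1,t]$ by substitution into \eqref{4.2}. Your added closure check (that $g+\frac{f}{t}(d-1)$ has constant term $1$ and $fh/t\in\F_1$) is a harmless bonus that the paper defers to Theorem \ref{thm:4.2}.
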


\begin{proof} The FFTQRA \eqref{4.1} can be proved as follows. 
\begin{align*}
&[g,f]u=(g, f, tf, t^2f, \cdots )\left( \begin{matrix}u_0\\u_1\\u_2\\ \vdots\end{matrix}\right)\\
=&gu_0 +f\sum_{n\geq 1} u_nt^{n-1}=gu_0+\frac{f}{t}(u-u_0).
\end{align*}

By using FFTQRA and noting $d(0)=1$ and $h(0)=0$, we have 

\begin{align*}
& [g,f][d,h]=(g, f, tf, t^2f, \cdots)(d, h, th, t^2h,\cdots)\\
=&\left( g+\frac{f}{t}(d-1), \frac{f}{t}h, \frac{f}{t}th, \frac{f}{t}t^2h,\cdots\right),
\end{align*}
which implies \eqref{4.2}. 

Substituting $[g,f]=[1,t]$ and $[d,h]=[1,t]$ into \eqref{4.2}, we obtain, respectively, 

\begin{align*}
&[1,t][g,f]=[1+(g-1), f]=[g,f]\quad \mbox{and}\\
&[g,f][1,t]=\left[ g+\frac{f}{t}(1-1), \frac{f}{t}t\right]=[g,f],
\end{align*}
which implies $[1,t]$, the identity matrix, is the identity of ${\cal R}_r$. 
\end{proof}

\begin{theorem}\label{thm:4.2}
The set of all quasi-Riordan arrays ${\cal R}_r$ is a group, called the quasi-Riordan group, with respect to the multiplication represented in \eqref{4.2}.
\end{theorem}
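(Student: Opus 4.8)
The plan is to verify the three group axioms for $\mathcal{R}_r$ under the multiplication rule \eqref{4.2}, treating Proposition \ref{pro:4.1} as the workhorse. Proposition \ref{pro:4.1} already supplies closure (the product $[g,f][d,h]=[g+\tfrac{f}{t}(d-1),\tfrac{fh}{t}]$ is again presented as a quasi-Riordan array) and the two-sided identity $[1,t]$, so what remains is to confirm that the product formula lands inside $\mathcal{R}_r$ with the correct normalizations, to establish associativity, and to exhibit inverses.

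First I would check the closure claim more carefully. For $[g+\tfrac{f}{t}(d-1),\tfrac{fh}{t}]$ to be a legitimate quasi-Riordan array, the first entry must lie in $\F_0$ with value $1$ at $t=0$, and the second must lie in $\F_1$. Since $d(0)=1$ we have $d-1\in\F_1$, so $\tfrac{f}{t}(d-1)$ is a genuine power series and its constant term is $0$; combined with $g(0)=1$ this gives $(g+\tfrac{f}{t}(d-1))(0)=1$, as required. Likewise $f,h\in\F_1$ forces $\tfrac{fh}{t}\in\F_1$. Thus $\mathcal{R}_r$ is closed.

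Next I would verify associativity. The cleanest route is to avoid brute-force expansion of the two-variable product formula and instead use the action on power series from \eqref{4.1}: I would argue that the map sending $[g,f]$ to the linear operator $u\mapsto gu(0)+\tfrac{f}{t}(u-u(0))$ is a faithful representation of $\mathcal{R}_r$ into the monoid of linear operators on $\F_0$ (or more precisely, matches matrix multiplication with operator composition, which is automatically associative). Concretely, I would check that applying $[g,f][d,h]$ to a series $u$ yields the same result as applying $[g,f]$ to $[d,h]u$, so that the product formula \eqref{4.2} really does represent composition of these operators; associativity of the operator composition then transfers back to the product.

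Finally I would construct inverses. Given $[g,f]$ with $g(0)=1$, $f\in\F_1$, I would seek $[d,h]$ solving $[g,f][d,h]=[1,t]$, which by \eqref{4.2} amounts to the two equations $\tfrac{fh}{t}=t$ and $g+\tfrac{f}{t}(d-1)=1$. The first gives $h=\tfrac{t^2}{f}$, which lies in $\F_1$ precisely because $f\in\F_1$ (so $t/f\in\F_0$ is invertible and $h=t\cdot(t/f)\in\F_1$); the second gives $d=1+\tfrac{t}{f}(1-g)$, and since $1-g\in\F_1$ and $t/f\in\F_0$ this $d$ lies in $\F_0$ with $d(0)=1$. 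I would then confirm this candidate is also a left inverse, either by the analogous computation or by the general fact that in a monoid a two-sided identity together with existence of right inverses for every element, under associativity, yields a group. The main obstacle is bookkeeping: ensuring each constructed series sits in the correct $\F_0$ or $\F_1$ with the right constant term, so that the inverse genuinely belongs to $\mathcal{R}_r$ rather than merely being a formal solution.
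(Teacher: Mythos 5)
Your proposal is correct, and it reaches the same conclusion by a genuinely different route in one place: associativity. The paper's proof is a direct algebraic expansion — it computes $([g,f][d,h])[u,v]$ and $[g,f]([d,h][u,v])$ via the product formula \eqref{4.2} and checks that both equal $\left[ g+\frac{f}{t}(d-1)+\frac{fh}{t^2}(u-1), \frac{fhv}{t^2}\right]$ — whereas you reduce associativity to composition of the linear operators $u\mapsto gu(0)+\frac{f}{t}(u-u(0))$ supplied by the FFTQRA \eqref{4.1}, verifying $([g,f][d,h])u=[g,f]([d,h]u)$ for all $u$ and using faithfulness (which is immediate, since applying the operator to $1$ and to $t$ recovers $g$ and $f$, respectively). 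Your route is more conceptual: it explains why \eqref{4.2} must be associative (it encodes an operator composition, essentially matrix multiplication) rather than confirming it by computation, at the cost of the small extra checks you flag, e.g.\ that $([d,h]u)(0)=u(0)$ because $d(0)=1$ and $\frac{h}{t}(u-u(0))$ has zero constant term. Your treatment of inverses also runs in the opposite direction from the paper's: you solve $[g,f][d,h]=[1,t]$ to derive $h=t^2/f$ and $d=1+\frac{t}{f}(1-g)$, recovering exactly the paper's formula \eqref{4.3}, and then invoke the standard monoid fact that associativity plus an identity plus one-sided inverses yields a group; the paper instead states \eqref{4.3} outright and verifies only that it is a left inverse, so it implicitly leans on the same one-sided-inverse principle. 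Finally, your closure bookkeeping — that $g+\frac{f}{t}(d-1)$ has constant term $1$ because $d-1$ has zero constant term, and that $fh/t$ has order exactly $1$ — is more careful than the paper, which asserts closure directly from \eqref{4.2}; these normalizations are exactly what guarantee that the product and the inverse genuinely belong to ${\cal R}_r$, so this is a worthwhile addition rather than a digression.
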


\begin{proof}
From \eqref{4.2} of Proposition \ref{pro:4.1}, ${\cal R}_r$ is closed with respect to the multiplication. \eqref{4.2} also 
shows $[1,t]$ is the identity of ${\cal R}_r$. For any $[g,f]\in {\cal R}_r$, its inverse is 

\be\label{4.3}
[g,f]^{-1}=\left[ 1+\frac{t}{f}(1-g), \frac{t^2}{f}\right],
\ee
since

\bn
\left[ 1+\frac{t}{f}(1-g), \frac{t^2}{f}\right][g,f]=\left[ 1+\frac{t}{f}(1-g)+\frac{t^2/f}{t}(g-1), \frac{t^2}{f}\frac{f}{t}\right]=[1,t].
\en
Finally, the associative law is satisfied for any $[g,f], [d,h]$, and $[u,v]\in{\cal R}_r$ because 

\begin{align*}
&([g,f][d,h])[u,v]=\left[ g+\frac{f}{t}(d-1), \frac{fh}{t}\right][u,v]\\
=&\left[ g+\frac{f}{t}(d-1)+\frac{fh}{t^2}(u-1), \frac{fhv}{t^2}\right]
\end{align*}
and

\begin{align*}
&[g,f]([d,h])[u,v]=[g,f] \left[ d+\frac{h}{t}(u-1), \frac{hv}{t}\right]\\
=&\left[ g+\frac{f}{t}\left(d+\frac{h}{t}(u-1)-1\right), \frac{fhv}{t^2}\right]\\
=&\left[ g+\frac{f}{t}(d-1)+\frac{fh}{t^2}(u-1), \frac{fhv}{t^2}\right]
\end{align*}
shows $([g,f][d,h])[u,v]=[g,f]([d,h][u,v])$.
\end{proof}

\begin{example}\label{ex:4.1}
Consider the quasi-Riordan array $[1/(1-t), t/(1-t)]$, which is a Appell-type Riordan array $(1/(1-t),t)$ with its inverse $(1-t,t)$. From \eqref{4.3}  its inverse in ${\cal R}_r$ is 

\begin{align*}
&\left[ \frac{1}{1-t},\frac{t}{1-t}\right]^{-1}=\left[ 1+\frac{t}{f}(1-g), \frac{t^2}{f}\right]\\
=&\left[ 1+\frac{t}{t/(1-t)}\left(1-\frac{1}{1-t}\right), \frac{t^2}{t/(1-t)}\right]=[1-t, t(1-t)]\\
=&(1-t,t)=\left( \begin{matrix} 1 & & &   \\-1 &1 & &   \\0 &-1 & 1 &  \\ 0 & 0 & -1 & 1  \\
\vdots & \vdots& \vdots &\ddots \end{matrix}\right).
\end{align*}
\end{example}

\begin{theorem}\label{thm:4.3}
A quasi-Riordan array $[g,f]$ is a Riordan array if and only if $f=tg$, i.e., when $[g,f]$ is an Appell-type Riordan array. Hence, 
${\cal A}_r:=\{ [g,tg]: g\in \F_0, g(0)=1\}$ is a subgroup of ${\cal R}_r$, which is called the Apell quasi-Riordan subgroup.
\end{theorem}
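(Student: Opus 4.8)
The plan is to prove the equivalence in two directions and then verify the subgroup claim directly. Recall from Definition \ref{def:2.3} that $[g,f]$ has column generating functions $g, f, tf, t^2f, \ldots$, so its entries are $[t^n]g$ in column $0$ and $[t^n]t^{k-1}f = f_{n-k+1}$ in column $k\geq 1$. By contrast, a Riordan array $(d,h)$ has column generating functions $d, dh, dh^2, \ldots$. So the heart of the matter is to compare the column generating functions of $[g,f]$ with those required of a genuine Riordan array.

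For the direction $f=tg \Rightarrow [g,f]$ is Riordan, the cleanest route is to invoke the final sentence of Definition \ref{def:2.3}, which already records that $[g,tg]=(g,t)$; since $(g,t)$ is a (proper, Appell-type) Riordan array whenever $g\in\F_0$ with $g(0)=1$, this direction is immediate. Alternatively I would verify it by hand: when $f=tg$, the $k$th column ($k\geq 1$) of $[g,f]$ has generating function $t^{k-1}f = t^{k-1}(tg) = t^k g$, which is exactly the $k$th column $g\cdot t^k$ of the Riordan array $(g,t)$, while the $0$th column is $g$ in both; hence $[g,tg]=(g,t)$.

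For the converse, I would argue by contraposition or by matching generating functions. Suppose $[g,f]$ is a Riordan array, say $[g,f]=(d,h)$ for some $d\in\F_0$, $h\in\F_1$. Comparing $0$th columns forces $d=g$. Comparing $1$st columns forces the generating function $f$ (the column-$1$ gf of $[g,f]$) to equal $dh = gh$, so $h=f/g$. Comparing $2$nd columns forces $tf$ (the column-$2$ gf of $[g,f]$) to equal $dh^2 = g\cdot(f/g)^2 = f^2/g$. Setting $tf = f^2/g$ and cancelling $f$ (nonzero as $f\in\F_1$) yields $tg = f$, which is the desired condition. One should check consistency with all higher columns: the column-$k$ gf of $[g,f]$ is $t^{k-1}f$, while that of $(g,f/g)$ is $g(f/g)^k=f^k/g^{k-1}$; under $f=tg$ both equal $t^k g$, confirming there is no obstruction beyond column $2$.

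Finally, for the subgroup assertion about ${\cal A}_r=\{[g,tg]:g\in\F_0,\,g(0)=1\}$, I would use the group multiplication law \eqref{4.2} from Proposition \ref{pro:4.1}. Taking two elements $[g,tg]$ and $[d,td]$ with $g(0)=d(0)=1$, formula \eqref{4.2} gives
\be
[g,tg][d,td]=\left[g+\frac{tg}{t}(d-1),\,\frac{(tg)(td)}{t}\right]=\left[gd,\,t(gd)\right],\nonumber
\ee
which is again of the form $[e,te]$ with $e=gd\in\F_0$ and $e(0)=1$, establishing closure. The identity $[1,t]=[1,t\cdot 1]$ lies in ${\cal A}_r$, and the inverse formula \eqref{4.3} applied to $[g,tg]$ gives $\left[1+\frac{t}{tg}(1-g),\,\frac{t^2}{tg}\right]=\left[1/g,\,t/g\right]=[1/g,\,t(1/g)]$, again in ${\cal A}_r$. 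I expect the main (and only mild) obstacle to be the converse direction: one must be careful to compare \emph{enough} columns to pin down the condition $f=tg$ without overlooking that the constraint already saturates at column $2$, and to confirm that the resulting array is proper (the normalization $g(0)=1$ guaranteeing $h=f/g\in\F_1$).
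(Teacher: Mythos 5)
Your proposal is correct, and it is in fact more complete than the paper's own proof. The paper's proof of this theorem consists solely of the two computations in your final paragraph: closure, $[g,tg][d,td]=[gd,\,tgd]$ via \eqref{4.2}, and inverses, $[g,tg]^{-1}=[1/g,\,t/g]=[1/g,\,t(1/g)]$ via \eqref{4.3}; the ``if and only if'' characterization is never argued there at all. The ``if'' direction is only recorded as an unproved remark at the end of Definition~\ref{def:2.3} (that $[g,tg]=(g,t)$), and the ``only if'' direction appears nowhere in the paper. Your column-matching argument supplies exactly this missing converse: comparing column $0$ forces $d=g$, column $1$ forces $h=f/g$ (which lies in $\F_1$ since $g(0)=1$), and column $2$ forces $tf=f^2/g$, whence $f=tg$ after cancelling $f$ --- cancellation being legitimate because $f\in\F_1$ has order exactly one, so $f\neq 0$ in the integral domain $\bK[\![t]\!]$. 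Your verification that all higher columns agree under $f=tg$ (both equal $t^kg$) is not strictly needed for the converse, since columns $1$ and $2$ already yield the necessary condition, but it cleanly establishes the ``if'' direction by hand rather than by citation. The only cosmetic addition you make beyond the paper is the explicit check that $[1,t]\in{\cal A}_r$ and that $gd$ and $1/g$ have constant term $1$, which the paper leaves implicit; both are worth stating, since membership in ${\cal A}_r$ requires the normalization $g(0)=1$. In short: your subgroup verification coincides with the paper's proof, and your generating-function comparison fills a genuine gap in it.
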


\begin{proof}
Let $[g,tg]$, $[d,td]\in {\cal R}_r$. Then 

\[
[g,tg][d,td]=\left[ g+\frac{tg}{t}(d-1), \frac{t^2gd}{t}\right]=[ gd,tgd].
\]
${\cal A}$ is closed under the multiplication. In addition, the inverse of $[g,f]$

\[
[g,tg]^{-1}= \left[ 1+\frac{t}{tg}(1-g),\frac{t^2}{tg}\right]=\left[ \frac{1}{g}, \frac{t}{g}\right]
\]
is also in ${\cal A}$.
\end{proof}

\begin{theorem}\label{thm:4.4}
Let ${\cal R}_r$ be the quasi-Riordan group. Then every conjugate of an element $[g,f]\in {\cal R}_r$ is in the set ${{\cal R}(f)}_{r}:=\{[d,f]: d\in \F_0, d(0)=1\}$. Hence, ${\cal A}=\{ [g,t]:g\in \F_0, g(0)=1\}$ is a normal subgroup of ${\cal R}_r$. 
\end{theorem}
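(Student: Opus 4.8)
The plan is to compute a general conjugate explicitly using the multiplication formula \eqref{4.2} and the inverse formula \eqref{4.3}, and then read off that its second component is unchanged. Concretely, I would fix $[g,f]\in{\cal R}_r$ and take an arbitrary $[u,v]\in{\cal R}_r$, and compute the conjugate $[u,v][g,f][u,v]^{-1}$. The goal is to show that the second slot of the result is again $f$, so that the conjugate lies in ${{\cal R}(f)}_r$. By the group axioms established in Theorem \ref{thm:4.2}, it suffices to track only the second component through the two products, since the second component is what determines membership in ${{\cal R}(f)}_r$.

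The key computational step exploits the especially simple multiplication rule for second components: by \eqref{4.2}, the second component of a product $[a,b][c,d]$ is just $bd/t$. Therefore, writing $[u,v]^{-1}=\left[\,1+\frac{t}{v}(1-u),\,\frac{t^2}{v}\,\right]$ from \eqref{4.3}, the second component of $[u,v][g,f]$ is $vf/t$, and then the second component of $\left([u,v][g,f]\right)[u,v]^{-1}$ is
\[
\frac{1}{t}\cdot\frac{vf}{t}\cdot\frac{t^2}{v}=f.
\]
This shows the conjugate equals $[d,f]$ for some $d\in\F_0$ with $d(0)=1$, i.e.\ it lies in ${{\cal R}(f)}_r$, proving the first assertion. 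The normalization $d(0)=1$ is automatic because every element of ${\cal R}_r$ has second-slot order one and the product formula preserves the $g(0)=1$ convention.

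For the normality of ${\cal A}=\{[g,t]:g\in\F_0,\ g(0)=1\}$, I would specialize the first assertion to $f=t$. A conjugate of $[g,t]\in{\cal A}$ then has second component $t$, hence is of the form $[d,t]$ and so lies in ${\cal A}$ again; this is exactly the definition of a normal subgroup. I should first verify that ${\cal A}$ actually is a subgroup (closure under \eqref{4.2} gives $[g,t][d,t]=[gd,t]$ and the inverse \eqref{4.3} gives $[g,t]^{-1}=[\,2-g,\,t\,]$, both with second slot $t$), which is a short check.

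The main obstacle I anticipate is not the algebra but a subtlety in the statement itself: Theorem \ref{thm:4.4} asserts that \emph{every} conjugate lands in ${{\cal R}(f)}_r$, yet the set ${{\cal R}(f)}_r$ is not a subgroup in general (its elements do not share the second component with the conjugating elements), so one must be careful that the claim is really about the orbit of $[g,f]$ under conjugation rather than about ${{\cal R}(f)}_r$ being normal. The honest reading, supported by the second-component computation above, is that conjugation fixes the second slot $f$; the only genuinely normal subgroup produced this way is the case $f=t$, namely ${\cal A}$. I would therefore present the second-component invariance as the crux and derive both conclusions from it, flagging that ${{\cal R}(f)}_r$ serves as the conjugacy-stable set while ${\cal A}$ is the resulting normal subgroup.
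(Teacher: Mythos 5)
Your approach is essentially the same as the paper's: the paper also conjugates explicitly using \eqref{4.2} and \eqref{4.3}, obtaining $[d,h][g,f][d,h]^{-1}=\left[\,d+\frac{h}{t}(g-1)-\frac{f}{t}(d-1),\;f\,\right]\in{{\cal R}(f)}_r$ and then specializing to $f=t$ for normality. Your streamlining --- tracking only the second slot, which by \eqref{4.2} composes as $bd/t$ --- is a legitimate shortcut, and your two supplementary observations (that $d(0)=1$ follows from closure of ${\cal R}_r$ under the product, and that ${{\cal R}(f)}_r$ is only a conjugation-stable set, not a subgroup, so the normal subgroup one extracts is precisely the case $f=t$) are correct points that the paper leaves implicit; indeed the paper never separately verifies that ${\cal A}$ is a subgroup, so your doing so is an improvement. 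One correction to that verification: by \eqref{4.2} with $f=h=t$ one gets $[g,t][d,t]=\left[\,g+\frac{t}{t}(d-1),\,\frac{t\cdot t}{t}\,\right]=[\,g+d-1,\,t\,]$, not $[gd,t]$ --- the first slot of ${\cal A}$ composes \emph{additively}, as your own (correct) inverse formula $[g,t]^{-1}=[2-g,t]$ already confirms, since $g+(2-g)-1=1$. (The multiplicative law you wrote down holds instead for the Appell-type elements of Theorem \ref{thm:4.3}, where $[g,tg][d,td]=[gd,tgd]$.) Since $(g+d-1)(0)=1$, closure of ${\cal A}$ still holds, so this slip does not affect the validity of your argument.
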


\begin{proof}
Let $[d,h]\in {\cal R}_r$. Then for an arbitrarily fixed $[g,f]\in{\cal R}_r$, we have 

\begin{align*}
&[d,h][g,f][d,h]^{-1}=[d,h][g,f]\left[ 1+\frac{t}{h}(1-d), \frac{t^2}{h}\right]\\
=&\left[ d+\frac{h}{t}(g-1)-\frac{f}{t}(d-1), f\right]\in {{\cal R}(f)}_r.
\end{align*}
Particularly, if $f=t$ in $[g,f]$, then 

\begin{align*}
&[d,h][g,t][d,h]^{-1}=\left[ 1+\frac{h}{t}(g-1), t\right],
\end{align*}
which implies ${\cal A}=\{ [g,t]:g\in \F_0, g(0)=1\}$ is a normal subgroup of ${\cal R}_r$.
\end{proof}

\section{An vertical recursive relation view of $(c)$- and $(C)$-Riordan arrays}
In this section, we will extend the vertical recursive relation to the $(c)$ Riordan arrays. We start from the following definition of $(c)$-class of Riordan arrays. 

\begin{definition}\label{def:3.1}
Let $(g,f)=(d_{n,k})_{n,k\geq 0}$ be a Riordan array, and let $c = (c_0, c_1, c_2, . . .)$ be a sequence satisfying $c_0 = 1$ and $c_k \not= 0$ for all $k =1,2,\ldots$. Denote 

\be\label{3.1-0}
(g,f)_c=\left(\frac{c_n}{c_k}d_{n,k}\right)_{n,k\geq 0}.
\ee
Then the set 
\be\label{3.1}
\{{\cR}\}_{c}:=\left\{(g,f)_c: (g,f)=(d_{n,k})_{n,k\geq 0}\in{\cR}\right\} 
\ee
is called the $(c)$-class of ${\cR}$ or the set of the $(c)$-Riordan arrays. Since we may change $c_n$ and $c_k$ respectively to $1/c_n$ and $1/c_k$, 

\[
\left(\frac{c_k}{c_n}d_{n,k}\right)_{n,k\geq 0}=\left(\frac{1/c_n}{1/c_k}d_{n,k}\right)_{n,k\geq 0}
\]
is in the $(1/c)$-class of ${\cR}$, where $(1/c)=(1/c_0, 1/c_1, 1/c_2,\ldots)$.

Let $(g,f)=(d_{n,k})_{n,k\geq 0}$ be a Riordan array, and let $C = (c_{n,k})_{n,k\geq 0}$ be a lower triangle matrix satisfying $c_{n,0}= 1$ and $c_{n,k}\not= 0$ for all $0\leq k\leq n$ and $c_{n,k}=0$ for all $k> n$. Denote 

\be\label{3.2-0}
(g,f)_C=\left( \frac{c_{n,n}}{c_{n,k}}d_{n,k}\right)_{n,k\geq 0}.
\ee
The set 

\be\label{3.2}
\{{\cR}\}_C:=\left\{(g,f)_C: (g,f)=(d_{n,k})_{n,k\geq 0}\in{\cR}\right\}
\ee
is called the $(C)$-class of ${\cR}$, or the set of the $(C)$-Riordan arrays. Similarly, $\left(\frac{c_{n,k}}{c_{n,n}}d_{n,k}\right)_{n,k\geq 0}$ is in the $(1/C)$-class of ${\cR}$, where $(1/C)=(1/c_{n,k})_{n,k\geq 0}$. 
\end{definition}

Wang and Wang \cite{WW} defines the $(c)$-Riordan arrays by using the generalized formal power series. Gould and the author \cite{GH} claims that for a sequence $(c)$ the $(c)$-class of ${\cR}$, or the set of the $(c)$-Riordan arrays $\{{\cR}\}_c$, forms a group. More precisely, we have the following theorem. 

\begin{theorem}\label{thm:3.-1}\cite{GH}
Let $c = (c_0, c_1, c_2, . . .)$ be a sequence satisfying $c_0 = 1$ and $c_k \not= 0$ for all $k =1,2,\ldots$, and let $\{{\cR}\}_c$ be the $(c)$-class defined by \eqref{3.1}. Then $\{{\cR}\}_c$ 
forms a group with respect to the regular matrix multiplication. This group is also denoted by ${\cR}_c$ and called the $(c)$-Riordan group with respect to the sequence $(c)$.
\end{theorem}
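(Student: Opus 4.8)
The plan is to recognize that the passage from a Riordan array $(g,f)=(d_{n,k})$ to its $(c)$-version $(g,f)_c$ is nothing but conjugation by a fixed invertible diagonal matrix, and then to invoke the fact (stated earlier) that $\cR$ is itself a group. Concretely, let $D_c=\mathrm{diag}(c_0,c_1,c_2,\ldots)$, which is invertible since every $c_k\neq 0$, with inverse $D_c^{-1}=\mathrm{diag}(1/c_0,1/c_1,\ldots)$. First I would verify by a one-line entrywise computation that
\[
\bigl(D_c\,(g,f)\,D_c^{-1}\bigr)_{n,k}=c_n\,d_{n,k}\,\frac{1}{c_k}=\frac{c_n}{c_k}\,d_{n,k},
\]
so that $(g,f)_c=D_c\,(g,f)\,D_c^{-1}$ matches Definition \ref{def:3.1} exactly. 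All matrices involved are infinite lower triangular, so each entry of every product is a finite sum and the multiplications are well-defined and associative.

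Once this identification is in place, consider the map $\Phi_c(X):=D_c\,X\,D_c^{-1}$ on invertible infinite lower triangular matrices. It is a homomorphism for matrix multiplication because the inner factors telescope:
\[
\Phi_c(X)\,\Phi_c(Y)=D_c X D_c^{-1}\,D_c Y D_c^{-1}=D_c\,(XY)\,D_c^{-1}=\Phi_c(XY).
\]
By construction $\{\cR\}_c=\Phi_c(\cR)$ is the image of the group $\cR$ under $\Phi_c$, hence automatically a group isomorphic to $\cR$. Spelling this out: closure follows from $\Phi_c(X)\Phi_c(Y)=\Phi_c(XY)$ together with the Riordan product formula \eqref{Proddef}, which guarantees $XY\in\cR$ whenever $X,Y\in\cR$; the identity is $\Phi_c(I)=D_c I D_c^{-1}=I=(1,t)_c$ (here $c_0=1$ keeps the $(0,0)$ entry normalized to $1$); and the inverse of $(g,f)_c=\Phi_c(g,f)$ is $\Phi_c\bigl((g,f)^{-1}\bigr)$, which lies in $\{\cR\}_c$ because $(g,f)^{-1}\in\cR$ by \eqref{Invdef}. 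Associativity is inherited from matrix multiplication.

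The only point requiring genuine care — and what I would regard as the main (mild) obstacle — is the closure statement: one must be sure that the ordinary matrix product of two $(c)$-Riordan arrays, computed directly, really is again of the form $(h,e)_c$ for some Riordan array $(h,e)$. The telescoping identity handles this cleanly, reducing it to the already-established fact that $\cR$ is closed under multiplication; the role of the conjugation is precisely to convert the (nonobvious) $(c)$-weighted product rule into the ordinary Riordan product rule. If a self-contained, formula-level proof were preferred, one could instead compute $(g_1,f_1)_c(g_2,f_2)_c$ entrywise, observe that the $c_n/c_k$ factors survive while the intermediate $c_j$ factors cancel in each summand, and match the result against $\bigl((g_1,f_1)(g_2,f_2)\bigr)_c$; but the conjugation argument makes this cancellation structural rather than computational, and simultaneously yields the explicit inverse and the isomorphism $\{\cR\}_c\cong\cR$ for free.
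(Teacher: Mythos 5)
Your proof is correct, but it takes a genuinely different route from the paper's. The paper proves closure by the direct entrywise computation that you mention only as an alternative in your last paragraph: writing $(g_1,f_1)=(d_{n,k})$, $(g_2,f_2)=(e_{n,k})$ and $(g_1,f_1)(g_2,f_2)=(h_{n,k})$, it computes
\[
(g_1,f_1)_c(g_2,f_2)_c=\left(\sum_{j}\frac{c_n}{c_j}d_{n,j}\,\frac{c_j}{c_k}e_{j,k}\right)_{n,k\geq 0}=\left(\frac{c_n}{c_k}h_{n,k}\right)_{n,k\geq 0}=(g_1g_2(f_1),f_2(f_1))_c,
\]
watching the intermediate $c_j$ factors cancel exactly as you predicted, and then verifies the identity $(1,t)_c$, the inverse $(1/g(\overline{f}),\overline{f})_c$, and associativity one axiom at a time. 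Your argument instead packages the passage $(g,f)\mapsto (g,f)_c$ as conjugation by $D_c=\mathrm{diag}(c_0,c_1,c_2,\ldots)$, so that $\{\cR\}_c=\Phi_c(\cR)$ is the image of the group $\cR$ under the inner automorphism $\Phi_c(X)=D_cXD_c^{-1}$ of the group of invertible lower triangular matrices, and is therefore a group automatically. The two proofs rest on the same telescoping cancellation, but yours makes it structural rather than computational: it delivers the isomorphism $\{\cR\}_c\cong\cR$ and the correct form of the inverse for free (indeed, the paper's stated inverse $(1/g(\overline{f}),\overline{f})$ is missing the subscript $c$, a slip your $\Phi_c\bigl((g,f)^{-1}\bigr)$ formulation avoids), whereas the paper's computation has the modest advantage of recording the explicit $(c)$-form of the product law, $(g_1,f_1)_c(g_2,f_2)_c=(g_1g_2(f_1),f_2(f_1))_c$, as a usable formula. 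Your attention to the well-definedness and associativity of products of infinite lower triangular matrices (entries are finite sums) is a point the paper passes over silently, so nothing is missing from your version.
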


\begin{proof}
If $(g_1,f_1)_c, (g_2,f_2)_c\in \{(g,f)\}_c$, and denote $(g_1,f_1)=(d_{n,k})_{n,k\geq 0}$, $(g_2,f_2)=(e_{n,k})_{n,k\geq 0}$, and $(g_1,f_1)(g_2,f_2)=(g_1g_2(f_1),f_2(f_1))=(h_{n,k})_{n,k\geq 0}$, then $(d_{n,k})_{n,k\geq 0}(e_{n,k})_{n,k\geq 0}=(h_{n,k})_{n,k\geq 0}$, and 

\begin{align*}
(g_1,f_1)_c(g_2,f_2)_c=&\left(\sum^{\min{n,k}}_{j=0}\left( \frac{c_n}{c_j}d_{n,j}\right)\left( \frac{c_j}{c_k}e_{j,k}\right)\right)_{n,k\geq 0}\\
=& \left( \frac{c_n}{c_k}\sum^{\min{n,k}}_{j=0}(d_{n,j}e_{j,k})\right)_{n,k}\\
=& \left(\frac{c_n}{c_k}h_{n,k}\right)_{n,k\geq 0}=(g_1g_2(f_1), f_2(f_1))_c.
\end{align*}
Hence, we may find $(1.t)_c$ is the identity of ${\cR}_c$ and the inverse of an element $(g,f)_c\in {\cR}_c$ is $(1/g(\overline{f}),\overline{f})$, where $\overline{f}$ is the compositional inverse of $f$. 
Finally, we have 

\begin{align*}
\left( (g,f)_c(d,h)_c\right)(u,v)_c=&\left( gd(f)u(h(v)), v(h(f))\right)\\
=&(g,f)_c\left( (d,h)_c(u,v)_c\right).
\end{align*}
completing the proof.
\end{proof}

More materials on $(c)$-Riordan arrays can be found in \cite{He22, SSBCHMW}.

\begin{example}\label{ex:3.1}
Let $(g,f)=(1/(1-t), t/(1-t))$. Then $(1/(1-kt), t/(1-kt))\in \{(1/(1-t), t/(1-t))\}_c$ with $c=(c_n)_{n\geq 0}=(k^n)_{n\geq 0}$. The $(c)$-Riordan array $(1/(1-kt), t/(1-kt))_c$ begins 

\[
\left[\begin{array}{cccccccc}
1 & 0 & 0 & 0 & 0 & 0 & 0 &\\ 
k & 1 & 0 & 0 & 0 & 0 & 0 &\\ 
k^2 & k^2 & 1 & 0 & 0 & 0 &  0&\\ 
k^3 & k^2+k^3 & k^2+k& 1 & 0 & 0 & 0&\\
k^4 &2k^3+k^4 & 2k^2+2k^3 & k^2+2k& 1 & 0 & 0& \\ 
k^5 & 3 k^4+k^5& 4k^3+3k^4 & 4k^2+3k^3& k^2+3k & 1 &0&  \\ 
\vdots & \vdots& \vdots &\vdots &\vdots   & \vdots & \ddots
\end{array}
\right] 
\]
\end{example}

\begin{example}\label{ex:3.2}
Let $(g,f)=(1/(1-t), t/(1-t))=(\binom{n}{k})_{n,k\geq 0}$, and let $c=(1/n!)_{n\geq 0}$. Then 

\be\label{ex:3.2-2}
(r_{n,k})_{n,k\geq 0}=\left(\frac{n!}{k!}\binom{n}{k}\right)_{n,k\geq 0}=\left( (n-k)!\left( \binom{n}{k}\right)^2\right)\in \{(1/(1-t), t/(1-t))\}_c,
\ee
where $c=(c_n)_{n\geq 0}=(n!)_{n\geq 0}$. The $(c)$-Riordan array $(r_{n,k})_{n,k\geq 0}$ begins 

\[
(r_{n,k})_{n,k\geq 0}=\left[\begin{array}{cccccccc}
1 & 0 & 0 & 0 & 0 & 0 & 0 &\\ 
1 & 1 & 0 & 0 & 0 & 0 & 0 &\\ 
2 & 4 & 1 & 0 & 0 & 0 &  0&\\ 
6 & 18 & 9& 1 & 0 & 0 & 0&\\
24&96 & 72 & 16& 1 & 0 & 0& \\ 
120 & 600& 600 & 200& 5 & 1 &0&  \\ 
\vdots & \vdots& \vdots &\vdots &\vdots   & \vdots & \ddots
\end{array}
\right].
\]
The matrix $(r_{n,k})_{n,k\geq 0}$ is called a rook matrix. The polynomial $r_n(x)=\sum^n_{k=0}r(n,k)x^{n-k}$ are called the rook polynomial of $n$th order (cf. Fielder \cite{Fie} and Riordan \cite{Rio}). 

In general, consider a board that represents a full  or a part chess board. Let $m$ be the number of squares present in the board. Two pawns or rooks placed on a board are said to be in non capturing positions if they are not in same row or same column. For $ 2 \leq k\leq m$, let $r_k$ denote the number of ways in which $k$ rooks can be placed on a board such that no two rooks capture each other. Then the polynomial $1+r_1x+r_2x^2+\cdots +r_mx^m$ is called the (general) rook polynomial for the board considered. If the board is denoted by $C$, then the corresponding polynomial is denoted by $r(C,x)$. The rook polynomials are defined for $m\geq 2$. If $m=1$, then the board contains only one square so that $r_k=0$ for $k\geq 2$, and $r(C,x)=1+x$. Hence, the coefficient $r_{n,k}$ of the roof polynomial $r_n(x)=\sum^n_{k=0}r(n,k)x^{n-k}$ is referred to as the number of ways in which $k$ rooks can be placed on a $n\times n$ full chess board such that no two rooks capture each other. The rook polynomials for $1\times 1$, $2\times 2$, $3\times 3$, $4\times 4$, and $5\times 5$ full boards are, respectively, 

\begin{align*}
&r_1(x)=x+1,\\
&r_2(x)=2x^2+4x^1+1,\\
&r_3(x)=6x^3+18x^2+9x+1,\\
&r_4(x)=24x^4+96x^3+72x^2+16x+1,\\
&r_5(x)=120x^5+600x^4+600x^3+200x^2+5x+1.
\end{align*}

In a given board C, suppose we choose a particular square denoted as $(\ast)$. Let $D$ denote the board obtained from $C$ by deleting the row and column containing the square $(\ast)$, and let $E$ be the board obtained from $C$ by deleting only the square $(\ast)$. Then the rook polynomial 
for the board $C$ is given by $r(C,x)=xr(D,x)+r(E,x)$. This is known as Expansion formula for $r(C,x)$. 

Denote the $(n+1)\times (n+1)$ full board by $C_{n+1}$. Let $n\times n$ full board $C_n$ be located at the left upper corner of $C_{n+1}$, and let $E_n$ be the board obtained from the board $C_{n+1}$ by deleting only the square at the right lower corner of $C_{n+1}$. Then, we have 

\begin{align}\label{3.2-2}
r(E_n,x)=&r(C_{n+1}, x)-xr(C_n,x)=r_{n+1}(x)-xr_n(x)\nonumber\\
=&\sum^{n+1}_{k=0}\frac{(n+1)!}{k!}\binom{n+1}{k}x^{n+1-k}-x\sum^n_{k=0}\frac{n!}{k!}\binom{n}{k}x^{n-k}\nonumber\\
=&\sum^{n+1}_{k=0}\frac{n!}{k!}\left((n+1)\binom{n+1}{k}-\binom{n}{k}\right) x^{n+1-k}\nonumber\\
=&\sum^{n}_{k=0}\frac{n!}{k!}\binom{n}{k}\left(\frac{(n+1)^2}{n-k+1}-1\right) x^{n+1-k}+1\nonumber\\
=&\sum^{n}_{k=0}\frac{n!}{k!}\frac{n^2+n+k}{n-k+1}\binom{n}{k}x^{n+1-k}+1\nonumber\\
=&\sum^{n}_{k=0}\frac{n^2+n+k}{n-k+1}(n-k)!\left(\binom{n}{k}\right)^2x^{n+1-k}+1.
\end{align}
We denote $r(E_n,x)=\sum^{n+1}_{k=0}E_{n,k}x^{n+1-k}$ and call it the remainder polynomial, where 

\be\label{3.2-3}
E_{n,k}=\frac{n^2+n+k}{n-k+1}(n-k)!\left(\binom{n}{k}\right)^2
\ee
for $0\leq k\leq n$ and $E_{n,n+1}=1$. Then, the lower triangle matrix $(E_{n,k})_{n,k\geq 0}$, called the remainder triangle, begins

\[
(E_{n,k})_{n,k\geq 0}=\left[\begin{array}{cccccccc}
0& 1 & 0 & 0 & 0 & 0 & 0 &\\ 
1 & 3 & 1 & 0 & 0 & 0 & 0 &\\ 
4& 14 & 8 & 1& 0 & 0 &  0&\\ 
18 & 78 & 63& 15 & 1 & 0 & 0&\\
96&504 &528 & 184& 4 & 1 & 0& \\ 
\vdots & \vdots& \vdots &\vdots &\vdots   & \vdots & \ddots
\end{array}
\right].
\]
\end{example}

Here is a recursive relation related to the rook triangle $(r_{n,k})_{n,k\geq 0}$ and the triangle 
$(E_{n,k})_{n,k\geq 0}$. 

\begin{proposition}\label{pro:3.0}
Let $(r_{n,k})_{n,k\geq 0}$ and $(E_{n,k})_{n,k\geq 0}$ be the rook triangles and the remainder triangle defined in Example \ref{ex:3.2}, and let $r_n(x)=\sum^n_{k=0}r(n,k)x^{n-k}$ and $r(E_n,x)=\sum^n_{k=0}E_{n,k}x^{n-k}$ be the rook polynomial and the remainder polynomial, respectively. Then, there exists recursive relation 

\be\label{3.2-4}
r_{n+1}(x)=xr_n(x)+r(E_n,x)
\ee
for $n\geq 0$, which can be expressed as a matrix form

\be\label{3.2-5}
\overline{(r_{n,k})}=(r_{n,k})+(E_{n,k}),
\ee
where $n\geq 0$ and $\overline{(r_{n,k})}$ is obtained from $(r_{n,k})$ by deleting its first row. 
Furthermore, we have the expansion formula for $r_{n+1}(x)$ as

\be\label{3.2-6}
r_{n+1}(x)=\sum^n_{k=0} x^{n-k}r(E_k,x)+x^{n+1}.
\ee
\end{proposition}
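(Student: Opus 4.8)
The plan is to derive the single recurrence \eqref{3.2-4} from the expansion formula and then obtain both \eqref{3.2-5} and \eqref{3.2-6} as formal consequences of it. First I would prove \eqref{3.2-4} by specializing the expansion formula $r(C,x)=xr(D,x)+r(E,x)$ recorded just before the statement. Taking $C=C_{n+1}$ and choosing the distinguished square $(\ast)$ to be the lower-right corner, the board obtained by deleting the row and column through $(\ast)$ is exactly the upper-left $n\times n$ full board $C_n$, while deleting $(\ast)$ alone produces $E_n$. Hence $r(C_{n+1},x)=xr(C_n,x)+r(E_n,x)$, and rewriting $r(C_{n+1},x)=r_{n+1}(x)$ and $r(C_n,x)=r_n(x)$ gives \eqref{3.2-4} directly. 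This is the content already made explicit in \eqref{3.2-2}, so this step demands essentially no new computation.

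Next I would read off \eqref{3.2-5} by comparing coefficients of equal powers of $x$ in \eqref{3.2-4}. Writing $r_{n+1}(x)=\sum_k r_{n+1,k}x^{n+1-k}$, $xr_n(x)=\sum_k r_{n,k}x^{n+1-k}$, and $r(E_n,x)=\sum_k E_{n,k}x^{n+1-k}$, the coefficient of $x^{n+1-k}$ yields the scalar identity $r_{n+1,k}=r_{n,k}+E_{n,k}$ for each admissible $k$. Since the $(n,k)$ entry of the shifted array $\overline{(r_{n,k})}$ is precisely $r_{n+1,k}$, these scalar identities assemble into the matrix equation \eqref{3.2-5}.

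Finally, \eqref{3.2-6} would follow by iterating \eqref{3.2-4}, peeling off one factor of $x$ at each step: substituting $r_n(x)=xr_{n-1}(x)+r(E_{n-1},x)$ into \eqref{3.2-4} and repeating down to $r_0$ telescopes to $r_{n+1}(x)=\sum_{k=0}^{n}x^{n-k}r(E_k,x)+x^{n+1}r_0(x)$. Because the rook polynomial of the $0$th-order (empty) board is $r_0(x)=r_{0,0}=1$, the last term collapses to $x^{n+1}$ and \eqref{3.2-6} results. I expect no genuine obstacle here; the only points needing care are fixing the base value $r_0(x)=1$ and keeping the summation ranges consistent through the telescoping, which is routine index bookkeeping rather than a substantive difficulty.
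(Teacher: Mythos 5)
Your proposal is correct and takes essentially the same route as the paper: \eqref{3.2-4} is obtained by specializing the Expansion formula to the lower-right corner square of $C_{n+1}$ (exactly the setup of \eqref{3.2-2}), \eqref{3.2-5} by comparing coefficients of like powers of $x$, and \eqref{3.2-6} by telescoping the recurrence down to $r_0$. Your only difference is cosmetic—you make the base value $r_0(x)=1$ explicit, which the paper leaves implicit in the term $x^{n+1}r_0(x)$ of its summed system.
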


\begin{proof}
Equation \eqref{3.2-4} comes from the Expansion formula. Substituting $r_n(x)=\sum^n_{k=0}r(n,k)x^{n-k}$ and $r(E_n,x)=\sum^n_{k=0}E_{n,k}x^{n-k}$ into \eqref{3.2-4} and comparing the coefficients of the same powers of $x$ on the both sides yields 

\[
r_{n+1,k}=r_{n,k}+E_{n,k},
\]
which can be combined as the matrix form \eqref{3.2-5} for $n,k\geq 0$. From \eqref{3.2-4} we have 

\begin{align*}
&r_{n+1}(x)=xr_n(x)+r(E_n,x),\\
&xr_{n}(x)=x^2r_{n-1}(x)+xr(E_{n-1},x),\\
&x^2r_{n-1}(x)=x^3r_{n-2}(x)+x^2r(E_{n-2},x),\\
&\cdots,\\
&x^{n}r_1(x)=x^{n+1}r_0(x)+x^{n}r(E_0,x).
\end{align*}
Adding up the above system and cancelling the same terms from the both sides yields \eqref{3.2-6}.
\end{proof}

\begin{example}\label{ex:3.3}
Let $(g,f)=(1/(1-t), t/(1-t))=(\binom{n}{k})_{n,k\geq 0}$. Then 

\begin{align}\label{ex:3.3-2}
(L_{n,k})_{n,k\geq 0}=&\left(\frac{(-1)^n/(n)_n}{(-1)^k /(n)_k}\binom{n}{k}\right)_{n,k\geq 0}\nonumber\\
=&\left( \frac{(-1)^{n-k}}{(n-k)!}\binom{n}{k}\right)\in \{(1/(1-t), t/(1-t))\}_C,
\end{align}
where $C=(c_{n,k})_{n,k\geq 0}=((-1)^n/(n)_k)_{n,k\geq 0}$. The $(C)$-Riordan array $(L_{n,k})_{n,k\geq 0}$ begins 

\[
\left[\begin{array}{cccccccc}
1 & 0 & 0 & 0 & 0 & 0 & 0 &\\ 
-1 & 1 & 0 & 0 & 0 & 0 & 0 &\\ 
\frac{1}{2}& -2 & 1 & 0 & 0 & 0 &  0&\\ 
-\frac{1}{6} & \frac{3}{2} & -3& 1 & 0 & 0 & 0&\\
\frac{1}{24}&-\frac{2}{3} &3 & -4& 1 & 0 & 0& \\ 
-\frac{1}{120} &\frac{5}{24}&-\frac{5}{3} & 5& -5 & 1 &0&  \\ 
\vdots & \vdots& \vdots &\vdots &\vdots   & \vdots & \ddots
\end{array}
\right].
\]
The matrix $(L_{n,k})_{n,k\geq 0}$ is called the Laguerre matrix. The polynomial $L_n(n,k)=\sum^n_{k=0} L_{n,k}x^{n-k}$ are called the Laguerre polynomial of order $n$ (cf., for example, Hsu, Shiue, and the author \cite{HHS} and Riordan \cite{Rio}).

Comparing \eqref{ex:3.2-2} and \eqref{ex:3.3-2} and noting 

\[
r_{n,k}=\frac{n!}{k!}\binom{n}{k},
\]
we immediately obtain

\be\label{ex:3.3-3}
r_{n,n-k}=\frac{n!}{(n-k)!}\binom{n}{k}=\frac{n!}{(n-k)!} (-1)^{n-k} (n-k)! L_{n,k}=(-1)^{n-k}n!L_{n,k},
\ee
which implies 

\be\label{ex:3.3-4}
\sum^n_{k=0}r_{n,n-k}x^{n-k}=n!\sum^n_{k=0} L_{n,k}(-x)^{n-k}=n!L_n(-x)
\ee
for $0\leq k\leq n$. Since the left-hand side of \eqref{ex:3.3-4} can be written as 

\begin{align*}
\sum^n_{k=0}r_{n,n-k}x^{n-k}=&\sum^n_{k=0}r_{n,k}x^k=x^n\sum^n_{k=0}r_{n,k}\left( \frac{1}{x}\right)^{n-k}=x^nr_n\left( \frac{1}{x}\right).
\end{align*}
Hence, we have the well-known formula 

\[
r_n(x)=n!x^nL_n\left( -\frac{1}{x}\right)
\]
for $n\geq 0$.
\end{example}

\begin{definition}\label{def:3.0}
We may extend the rook triangle and the rook polynomials to a general case by defending  the following generalized rook triangle and generalized rook polynomials: 

\be\label{ex:3.2-2-2}
(\hat r_{n,k})_{n,k\geq 0}=\left(\frac{n!}{k!}d_{n,k}\right)_{n,k\geq 0}=\left( (n-k)! \binom{n}{k}d_{n,k}\right)\in \{(g(t), f(t)\}_c,
\ee
where $c=(c_n)_{n\geq 0}=(n!)_{n\geq 0}$ and $(g,f)=(d_{n,k})_{n,k\geq 0}$. In addition, the polynomial 

\be\label{ex:3.2-2-3}
\hat r_n(x)=\sum^n_{k=0}\hat r_{n,k}x^{n-k}
\ee
is referred to as the generalized rook polynomial of degree $n$. 

We may also extend the Laguerre triangle and the Laguerre polynomials to a general case by defending  the following generalized Riordan type Laguerre triangle and generalized Riordan type Laguerre polynomials: 

\begin{align}\label{ex:3.3-2-2}
(\hat L_{n,k})_{n,k\geq 0}=&\left(\frac{(-1)^n/(n)_n}{(-1)^k/(n)_k}d_{n,k}\right)_{n,k\geq 0}\nonumber\\
=&\left( \frac{(-1)^{n-k}}{(n-k)!} \binom{n}{k}d_{n,k}\right)\in \{(g(t), f(t)\}_C,
\end{align}
where $C=(c_{n,k})_{n,k\geq 0}=(n!)_{n\geq 0}$ and $(g,f)=(d_{n,k})_{n,k\geq 0}$. In addition, the polynomial 

\be\label{ex:3.3-2-3}
\hat L_n(x)=\sum^n_{k=0}\hat L_{n,k}x^{n-k}
\ee
is referred to as the generalized Laguerre polynomial of degree $n$. 
\end{definition}

Similar to the relationship between the rook triangle and the Laguerre triangle and the relationship between the rook polynomials and the Laguerre polynomials, we may extend the relationships to the general cases as follows. 

\begin{theorem}\label{thm:3.0}
Let the generalized rook triangle and the rook polynomials and the generalized Riordan type Laguerre triangle and the generalized Riordan type Laguerre polynomials be defined in Definition \ref{def:3.0}. Then we have 

\be\label{ex:3.3-4-0}
\hat r_{n,n-k}=(-1)^{n-k}n!\hat L_{n,k}
\ee
for $0\leq k\leq n$ and 

\[
\hat r_n(x)=n!x^n\hat L_n\left( -\frac{1}{x}\right)
\]
for $n\geq 0$.
\end{theorem}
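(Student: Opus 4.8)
The plan is to prove the two identities by comparing the definitions of the generalized triangles directly, exactly as in the special case worked out in Example \ref{ex:3.3}, since the passage from the classical binomial-based rook/Laguerre triangles to the general $(g,f)$-based ones introduces the \emph{same} factor $d_{n,k}$ on both sides. First I would write out the two defining formulas from Definition \ref{def:3.0}: from \eqref{ex:3.2-2-2} we have $\hat r_{n,k}=(n-k)!\binom{n}{k}d_{n,k}=\frac{n!}{k!}d_{n,k}$, and from \eqref{ex:3.3-2-2} we have $\hat L_{n,k}=\frac{(-1)^{n-k}}{(n-k)!}\binom{n}{k}d_{n,k}$.

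Next I would compute $\hat r_{n,n-k}$ by substituting $n-k$ for the column index in the first formula:
\begin{align*}
\hat r_{n,n-k}=\frac{n!}{(n-k)!}d_{n,n-k}.
\end{align*}
The only subtlety is which entry $d_{n,\cdot}$ appears. I would then express $\hat L_{n,k}$ and check that $(-1)^{n-k}n!\hat L_{n,k}=(-1)^{n-k}n!\cdot\frac{(-1)^{n-k}}{(n-k)!}\binom{n}{k}d_{n,k}=\frac{n!}{(n-k)!}\binom{n}{k}d_{n,k}$, and reconcile this with the expression for $\hat r_{n,n-k}$. The first identity \eqref{ex:3.3-4-0} will follow once the factors of $\binom{n}{k}$ and the reindexing of $d$ are matched, paralleling the telescoping algebra $(-1)^{n-k}n!\,(n-k)!\cdot\frac{(-1)^{n-k}}{(n-k)!}=n!$ that was used to derive \eqref{ex:3.3-3} in the special case.

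For the polynomial identity I would start from the definition \eqref{ex:3.2-2-3}, namely $\hat r_n(x)=\sum_{k=0}^n \hat r_{n,k}x^{n-k}$, and reindex the sum by replacing $k$ with $n-k$, which sends $x^{n-k}$ to $x^{k}$ and turns the summand into $\hat r_{n,n-k}$. Applying the just-proved entrywise identity \eqref{ex:3.3-4-0} converts each term into a multiple of $\hat L_{n,k}$, and factoring out $n!x^{n}$ and recognizing the remaining sum as $\hat L_n$ evaluated at $-1/x$ (using $x^{k}=x^{n}\cdot x^{-(n-k)}=x^{n}(-1)^{n-k}(-1/x)^{n-k}$ to absorb the sign) yields $\hat r_n(x)=n!x^n\hat L_n(-1/x)$. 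This mirrors the computation \eqref{ex:3.3-4} exactly.

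The main obstacle I expect is purely bookkeeping rather than conceptual: one must keep careful track of the reindexing $k\mapsto n-k$ together with the accompanying sign $(-1)^{n-k}$ and the factorial ratio, and in particular verify that the entry of $d_{n,k}$ appearing after reindexing matches on both sides — the derivation in Example \ref{ex:3.3} implicitly relies on the symmetry of the binomial coefficient $\binom{n}{k}=\binom{n}{n-k}$, and the same symmetry is what makes the general $d_{n,k}$-weighted identity go through. No new result beyond Definition \ref{def:3.0} is needed; the argument is a direct generalization of the special case already established.
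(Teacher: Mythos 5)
There is a genuine gap, and it sits exactly where you filed it under ``bookkeeping.'' Your reductions are correct as far as they go: from \eqref{ex:3.2-2-2} you get $\hat r_{n,n-k}=\frac{n!}{(n-k)!}\,d_{n,n-k}$, while $(-1)^{n-k}n!\,\hat L_{n,k}=\frac{n!}{(n-k)!}\binom{n}{k}d_{n,k}$ under the second expression in \eqref{ex:3.3-2-2}, or $\frac{n!}{(n-k)!}\,d_{n,k}$ under the first expression there (note in passing that the two lines of \eqref{ex:3.3-2-2} disagree by a factor of $\binom{n}{k}$, an inconsistency in the definition itself). Either way, after the sign and factorial cancellations, the identity \eqref{ex:3.3-4-0} is \emph{equivalent} to $d_{n,n-k}=\binom{n}{k}d_{n,k}$ (respectively $d_{n,n-k}=d_{n,k}$), i.e.\ to a row-symmetry property of the underlying Riordan array. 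The binomial symmetry $\binom{n}{k}=\binom{n}{n-k}$ disposes only of the explicit binomial factors; it says nothing about the generic entries, so your closing assertion that ``the same symmetry is what makes the general $d_{n,k}$-weighted identity go through'' is false: a generic Riordan array has $d_{n,n-k}\neq d_{n,k}$. Concretely, take $(g,f)=(1,t)$, so $d_{n,k}=\delta_{n,k}$: then $\hat r_{n,n}=1$ for every $n$, while $\hat L_{n,0}=0$ for $n\geq 1$, so \eqref{ex:3.3-4-0} already fails at $k=0$. No reindexing can repair this, and since your polynomial identity is deduced from the entrywise one, it inherits the failure.

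To be fair, you are in good company: the paper states Theorem \ref{thm:3.0} with no proof at all, merely by analogy with Example \ref{ex:3.3}, and as printed it is false at this level of generality for precisely the reason above --- in Example \ref{ex:3.3} the step $d_{n,n-k}=d_{n,k}$ is legitimate only because there $d_{n,k}=\binom{n}{k}$. The correct repair is either to add the hypothesis that $(g,f)$ has palindromic rows, $d_{n,k}=d_{n,n-k}$ for $0\leq k\leq n$ (taking $\hat L_{n,k}=\frac{(-1)^{n-k}}{(n-k)!}d_{n,k}$, the weight-consistent first line of \eqref{ex:3.3-2-2}), or to restate \eqref{ex:3.3-4-0} with $d_{n,n-k}$ in place of $d_{n,k}$ on the right-hand side. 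Under either fix your computation --- including the reindexing $k\mapsto n-k$ and the absorption of the sign via $x^{k}=x^{n}(-1)^{n-k}(-1/x)^{n-k}$ in the polynomial identity --- goes through verbatim.
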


From Examples \ref{ex:3.2} and \ref{3.3}, we have a relationship between 

\begin{theorem}\label{thm:3.2}
Let $(g,f)=(d_{n,k})_{n,k\geq 0}$ be a Riordan array, and let $(g,f)_c$ and $(g,f)_C$ be defined in Definition \ref{def:3.1}. Then $(g,f)_c=(d^{(c)}_{n,k})_{n,k\geq 0}$ and $(g,f)_C=d^{(C)}_{n,k\geq 0}$ satisfy the horizontal recursive relations, which are extensions 
of $A$- and $Z$-sequence representation \eqref{eq:1.1} and \eqref{eq:1.3} of $(g,f)$ to $(g,f)_c$ and $(g,f)_C$, respectively:

\be\label{3.3}
d^{(c)}_{n+1,k+1}=\frac{c_{n+1}}{c_nc_{k+1}}\sum^{n-k}_{j=0}a_jc_{k+j}d^{(c)}_{n,k+j}
\ee
for $k\geq 0$ and 

\be\label{3.4}
d^{(c)}_{n+1,0}=\frac{c_{n+1}}{c_n}\sum^{n}_{j=0}z_jc_jd^{(c)}_{n,j}
\ee
as well as 

\be\label{3.5}
d^{(C)}_{n+1,k+1}=\frac{c_{n+1,n+1}}{c_{n,n}c_{n+1,k+1}}\sum^{n-k}_{j=0}a_jc_{n, k+j}d^{(C)}_{n,k+j}
\ee
for $k\geq 0$ and 

\be\label{3.6}
d^{(C)}_{n+1,0}=\frac{c_{n+1,n+1}}{c_{n,n}}\sum^{n}_{j=0}z_jc_{n,j}d^{(C)}_{n,j}.
\ee
\end{theorem}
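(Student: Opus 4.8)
The plan is to derive the four recursions (\ref{3.3})--(\ref{3.6}) directly from the known $A$- and $Z$-sequence characterizations (\ref{eq:1.1}) and (\ref{eq:1.3}) of the underlying Riordan array $(g,f)=(d_{n,k})_{n,k\geq 0}$, simply by substituting the defining rescalings (\ref{3.1-0}) and (\ref{3.2-0}) and bookkeeping the prefactors. The essential observation is that the $A$-sequence relation (\ref{eq:1.1}) can be rewritten, after reindexing the summation via $j=i-k$ so that the terms run over $d_{n,k},d_{n,k+1},\ldots,d_{n,n}$, in the form $d_{n+1,k+1}=\sum^{n-k}_{j=0}a_j d_{n,k+j}$; likewise (\ref{eq:1.3}) reads $d_{n+1,0}=\sum^{n}_{j=0}z_j d_{n,j}$. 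These are the clean forms on which all four computations rest.

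First I would treat the $(c)$-case. By definition $d^{(c)}_{n,k}=\frac{c_n}{c_k}d_{n,k}$, so $d_{n,k+j}=\frac{c_{k+j}}{c_n}d^{(c)}_{n,k+j}$ and $d_{n+1,k+1}=\frac{c_{k+1}}{c_{n+1}}d^{(c)}_{n+1,k+1}$. Substituting these into the reindexed $A$-sequence relation and solving for $d^{(c)}_{n+1,k+1}$ gives
\be
d^{(c)}_{n+1,k+1}=\frac{c_{n+1}}{c_{k+1}}\sum^{n-k}_{j=0}a_j\frac{c_{k+j}}{c_n}d^{(c)}_{n,k+j}
=\frac{c_{n+1}}{c_nc_{k+1}}\sum^{n-k}_{j=0}a_jc_{k+j}d^{(c)}_{n,k+j},
\ee
which is exactly (\ref{3.3}); the analogous substitution into the $Z$-sequence relation, using $d_{n,j}=\frac{c_j}{c_n}d^{(c)}_{n,j}$ and $d_{n+1,0}=\frac{c_0}{c_{n+1}}d^{(c)}_{n+1,0}=\frac{1}{c_{n+1}}d^{(c)}_{n+1,0}$ (recall $c_0=1$), yields (\ref{3.4}).

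Next I would repeat the argument for the $(C)$-case, where the scaling factor is $\frac{c_{n,n}}{c_{n,k}}$ and therefore depends on the row index $n$ as well. Here $d^{(C)}_{n,k}=\frac{c_{n,n}}{c_{n,k}}d_{n,k}$, so inverting on row $n$ gives $d_{n,k+j}=\frac{c_{n,k+j}}{c_{n,n}}d^{(C)}_{n,k+j}$ and on row $n+1$ gives $d_{n+1,k+1}=\frac{c_{n+1,k+1}}{c_{n+1,n+1}}d^{(C)}_{n+1,k+1}$; substituting into the reindexed $A$-sequence relation and clearing factors produces (\ref{3.5}), and the parallel substitution into the $Z$-sequence relation (using $c_{n,0}=1$) produces (\ref{3.6}). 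The computation is genuinely routine; the one point requiring care — and the main obstacle — is the reindexing of the original $A$-sequence sum so that in each scaled row the rescaling constants $c_{k+j}$ (resp.\ $c_{n,k+j}$) attach to the correct entries, and keeping straight that in the $(C)$-case the denominator factor is evaluated at row $n$ on the right-hand side but at row $n+1$ on the left-hand side, so that the ratio $c_{n+1,n+1}/(c_{n,n}c_{n+1,k+1})$ emerges exactly as written in (\ref{3.5}). Once the index alignment is correct, all four identities follow by inspection.
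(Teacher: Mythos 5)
Your proof is correct: rewriting \eqref{eq:1.1} as $d_{n+1,k+1}=\sum^{n-k}_{j=0}a_jd_{n,k+j}$ and \eqref{eq:1.3} as $d_{n+1,0}=\sum^n_{j=0}z_jd_{n,j}$, then substituting the rescalings $d_{n,k}=\frac{c_k}{c_n}d^{(c)}_{n,k}$ and $d_{n,k}=\frac{c_{n,k}}{c_{n,n}}d^{(C)}_{n,k}$ (using $c_0=1$ and $c_{n,0}=1$ for the zeroth-column cases) yields \eqref{3.3}--\eqref{3.6} exactly as you compute. The paper states Theorem \ref{thm:3.2} without proof, evidently treating this substitution as immediate, and your argument is precisely that intended routine derivation.
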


\begin{theorem}\label{thm:3.3}
Let $(g,f)=(d_{n,k})_{n,k\geq 0}$ be a Riordan array, and let $(g,f)_c$ and $(g,f)_C$ be defined in Definition \ref{def:3.1}. Then $(g,f)_c=(d^{(c)}_{n,k})_{n,k\geq 0}$ and $(g,f)_C=d^{(C)}_{n,k\geq 0}$ satisfy the vertical recursive relations, which are extensions 
of vertical recursive relation \eqref{2.1} of the entries of $(g,f)$ to $(g,f)_c$ and $(g,f)_C$, namely, 

\be\label{3.7}
d^{(c)}_{n,k}=\frac{c_{n}}{c_{k}}\sum^{n-k+1}_{j=1}f_j\frac{c_{k-1}}{c_{n-j}}d^{(c)}_{n-j,k-1}
\ee
and 

\be\label{3.8}
d^{(C)}_{n,k}=\frac{c_{n,n}}{c_{n,k}}\sum^{n-k+1}_{j=1}f_j\frac{c_{n-j,k-1}}{c_{n-j,n-j}}d^{(C)}_{n-j,k-1},
\ee
respectively. 
\end{theorem}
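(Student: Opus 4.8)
The plan is to prove both identities by direct substitution, reducing each one to the already-established vertical recurrence \eqref{2.1} for the underlying Riordan array $(g,f)$. The only content beyond bookkeeping is the observation that the weight factors attached to each summand are chosen precisely so that all the intermediate normalizing constants cancel, leaving the classical recurrence multiplied by a single outer normalization.

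First I would treat the $(c)$-case \eqref{3.7}. Recall from \eqref{3.1-0} that $d^{(c)}_{n,k}=\frac{c_n}{c_k}d_{n,k}$, and in particular $d^{(c)}_{n-j,k-1}=\frac{c_{n-j}}{c_{k-1}}d_{n-j,k-1}$. Substituting this into the right-hand side of \eqref{3.7} gives
$$\frac{c_n}{c_k}\sum_{j=1}^{n-k+1}f_j\,\frac{c_{k-1}}{c_{n-j}}\cdot\frac{c_{n-j}}{c_{k-1}}\,d_{n-j,k-1}=\frac{c_n}{c_k}\sum_{j=1}^{n-k+1}f_j\,d_{n-j,k-1},$$
since the factors $c_{n-j}$ and $c_{k-1}$ cancel in each summand. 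Invoking \eqref{2.1}, the remaining sum equals $d_{n,k}$, so the whole expression is $\frac{c_n}{c_k}d_{n,k}=d^{(c)}_{n,k}$, which is \eqref{3.7}.

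Then I would handle the $(C)$-case \eqref{3.8} in exactly the same fashion. Here \eqref{3.2-0} gives $d^{(C)}_{n,k}=\frac{c_{n,n}}{c_{n,k}}d_{n,k}$, hence $d^{(C)}_{n-j,k-1}=\frac{c_{n-j,n-j}}{c_{n-j,k-1}}d_{n-j,k-1}$. Substituting into the right-hand side of \eqref{3.8}, the paired factors $c_{n-j,k-1}$ and $c_{n-j,n-j}$ cancel summand by summand, leaving $\frac{c_{n,n}}{c_{n,k}}\sum_{j=1}^{n-k+1}f_j\,d_{n-j,k-1}$. Applying \eqref{2.1} once more collapses the sum to $d_{n,k}$, so the result is $\frac{c_{n,n}}{c_{n,k}}d_{n,k}=d^{(C)}_{n,k}$, establishing \eqref{3.8}.

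There is essentially no hard step: the whole argument is a one-line cancellation in each case, and the role of the theorem is simply to record the correct placement of the weight factors $c_{k-1}/c_{n-j}$ (respectively $c_{n-j,k-1}/c_{n-j,n-j}$) that makes the transported recurrence hold. If anything requires care, it is only the index ranges: one should check that for $1\le j\le n-k+1$ the indices satisfy $0\le k-1\le n-j$, so that every transported entry $d^{(c)}_{n-j,k-1}$ (resp.\ $d^{(C)}_{n-j,k-1}$) and every constant $c_{n-j}$, $c_{n-j,k-1}$, $c_{n-j,n-j}$ occurring in the denominators is defined and nonzero, as guaranteed by the hypotheses in Definition \ref{def:3.1}. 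With this verification the identities hold for all $n,k\ge 1$, matching the range of the base recurrence \eqref{2.1}.
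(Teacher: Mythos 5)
Your proof is correct: substituting the definitions $d^{(c)}_{n,k}=\frac{c_n}{c_k}d_{n,k}$ and $d^{(C)}_{n,k}=\frac{c_{n,n}}{c_{n,k}}d_{n,k}$ makes the weight factors cancel summand by summand, reducing both \eqref{3.7} and \eqref{3.8} to the base recurrence \eqref{2.1}, and your check that $0\le k-1\le n-j$ on the summation range (with the nonvanishing of the $c$'s from Definition \ref{def:3.1}) covers the only point needing care. The paper states Theorem \ref{thm:3.3} without proof, treating it as an immediate consequence of \eqref{2.1}, and your substitution-and-cancellation argument is precisely that intended derivation.
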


\begin{example}\label{ex:3.4} It is known that the $A$- and $Z$- sequences of $(1/(1-t), t/(1-t))$ are $(1,1,0,\ldots)$ and $(1,0,0,\ldots)$, respectively. Then for the rook triangle represented in Example \ref{ex:3.2} with $d_{n,k}=\binom{n}{k}$, from \eqref{3.3} and \eqref{3.4}, we obtain the horizontal recursive relations for the entries of the rook triangle,

\be\label{3.9}
r_{n,k}=nr_{n-1,k}+\frac{n}{k}r_{n-1,k-1}
\ee
for $k\geq 1$ and 

\be\label{3.10}
r_{n,0}=nr_{n-1,0}.
\ee
From \eqref{3.7} and noting \eqref{2.1.2} we obtain the vertical recursive relation for the entries of the rook triangle 

\be\label{3.11}
r_{n,k}=\sum^{n-k+1}_{j=1}\frac{(n)_j}{k}r_{n-j,k-1}
\ee
for $k\geq 1$. 

From \eqref{ex:3.2-2}, one may obtain an equivalent identity of \eqref{3.9}

\[
\binom{n}{k}^2=\frac{n}{n-k}\binom{n-1}{k}^2+\frac{n}{k}\binom{n-1}{k-1}^2.
\]
Simialrly, from \eqref{3.11}

\[
\binom{n}{k}^2=\sum^{n-k+1}_{j=1}\frac{(n)_j}{k(n-k)_{j-1}}\binom{n-j}{k-1}^2.
\]
\end{example}

\begin{example}\label{ex:3.5} 
Similarly, for the Laguerre triangle represented in Example \ref{ex:3.3} with $d_{n,k}=\binom{n}{k}$, from \eqref{3.3} and \eqref{3.4}, we obtain the horizontal recursive relations for the entries of the Laguerre triangle,

\be\label{3.9-2}
L_{n,k}=L_{n-1,k-1}-\frac{1}{n-k}L_{n-1,k}
\ee
for $k\geq 1$ and 

\be\label{3.10-2}
L_{n,0}=-\frac{1}{n}L_{n-1,0}.
\ee
From \eqref{3.7} and noting \eqref{2.1.2} we obtain the vertical recursive relation for the entries of the Laguerre triangle 

\be\label{3.11-2}
L_{n,k}=\frac{1}{(n-k)!}\sum^{n-k+1}_{j=1}(-1)^{j-1}(n-k-j+1)!L_{n-j,k-1}
\ee
for $k\geq 1$.
It can be seen that \eqref{3.11-2} is equivalent to \eqref{2.1.2}. 
\end{example}

\end{document}